\newtheorem{theorem}{Theorem}
\theoremstyle{plain}
\newtheorem{corollary}{Corollary}
\newtheorem{definition}{Definition}
\newtheorem{example}{Example}
\newtheorem{lemma}{Lemma}
\newtheorem{remark}{Remark}
\numberwithin{equation}{section}
\begin{document}
\title{ANTI-INVARIANT RIEMANNIAN SUBMERSIONS FROM SASAKIAN MANIFOLDS}
\author{I. K\"{u}peli Erken}
\address{Art and Science Faculty,Department of Mathematics, Uludag
University, 16059 Bursa, TURKEY}
\email{iremkupeli@uludag.edu.tr}
\author{C. Murathan}
\address{Art and Science Faculty,Department of Mathematics, Uludag
University, 16059 Bursa, TURKEY}
\email{cengiz@uludag.edu.tr}
\date{20.02.2013}
\subjclass[2000]{Primary 53C25, 53C43, 53C55; Secondary 53D15}
\keywords{Riemannian submersion, Sasakian manifold, Anti-invariant
submersion \ \ \ This paper is supported by Uludag University research
project (KUAP(F)-2012/57).}

\begin{abstract}
We introduce anti-invariant Riemannian submersions from Sasakian manifolds
onto Riemannian manifolds. We survey main results of anti-invariant
Riemannian submersions defined on Sasakian manifolds. We investigate
necessary and sufficient condition for an anti-invariant Riemannian
submersion to be totally geodesic and harmonic. We give examples of
anti-invariant submersions such that characteristic vector field $\xi $ is
vertical or horizontal.
\end{abstract}

\maketitle

\section{\textbf{Introduction}}

Let $F$ be a $C^{\infty }$-submersion from a Riemannian manifold ($M,g_{M})$
onto a Riemannian manifold $(N,g_{N}).$ Then according to the conditions on
the map $F:(M,g_{M})\rightarrow (N,g_{N}),$ we have the following
submersions:

semi-Riemannian submersion and Lorentzian submersion \cite{FAL}, Riemannian
submersion (\cite{BO1}, \cite{GRAY}), slant submersion (\cite{CHEN}, \cite%
{SAHIN1}), almost Hermitian submersion \cite{WAT}, contact-complex
submersion \cite{IANUS}, quaternionic submersion \cite{IANUS2}, almost $h$%
-slant submersion and $h$-slant submersion \cite{PARK1}, semi-invariant
submersion \cite{SAHIN2}, $h$-semi-invariant submersion \cite{PARK2}, etc.
As we know, Riemannian submersions are related with physics and have their
applications in the Yang-Mills theory (\cite{BL}, \cite{WATSON}),
Kaluza-Klein theory (\cite{BOL}, \cite{IV}), Supergravity and superstring
theories (\cite{IV2}, \cite{MUS}). In \cite{SAHIN}, Sahin introduced
anti-invariant Riemannian submersions from almost Hermitian manifolds onto
Riemannian manifolds. In this paper we consider anti-invariant Riemannian
submersions from Sasakian manifolds. The paper is organized as follows: In
section 2, we present the basic information about Riemannian submersions
needed for this paper. In section 3, we mention about Sasakian manifolds. In
section 4, we give definition of anti-invariant Riemannian submersions and
introduce anti-invariant Riemannian submersions from Sasakian manifolds onto
Riemannian manifolds. We survey main results of anti-invariant submersions
defined on Sasakian manifolds. We give examples of anti-invariant
submersions such that characteristic vector field $\xi $ is vertical or
horizontal.

\section{\textbf{Riemannian Submersions}}

In this section we recall several notions and results which will be needed
throughout the paper.

Let $(M,g_{M})$ be an $m$-dimensional Riemannian manifold , let $(N,g_{N})$
be an $n$-dimensional Riemannian manifold. A Riemannian submersion is a
smooth map $F:M\rightarrow N$ which is onto and satisfies the following
three axioms:

$S1$. $F$ has maximal rank.

$S2$. The differential $F_{\ast }$ preserves the lenghts of horizontal
vectors.

The fundamental tensors of a submersion were defined by O'Neill (\cite{BO1},%
\cite{BO2}). They are $(1,2)$-tensors on $M$, given by the formula:%
\begin{eqnarray}
\mathcal{T}(E,F) &=&\mathcal{T}_{E}F=\mathcal{H}\nabla _{\mathcal{V}E}%
\mathcal{V}F+\mathcal{V}\nabla _{\mathcal{V}E}\mathcal{H}F,  \label{AT1} \\
\mathcal{A}(E,F) &=&\mathcal{A}_{E}F=\mathcal{V}\nabla _{\mathcal{H}E}%
\mathcal{H}F+\mathcal{H}\nabla _{\mathcal{H}E}\mathcal{V}F,  \label{AT2}
\end{eqnarray}%
for any vector field $E$ and $F$ on $M.$ Here $\nabla $ denotes the
Levi-Civita connection of $(M,g_{M})$. These tensors are called
integrability tensors for the Riemannian submersions. Note that we denote
the projection morphism on the distributions ker$F_{\ast }$ and (ker$F_{\ast
})^{\perp }$ by $\mathcal{V}$ and $\mathcal{H},$ respectively. The following
\ Lemmas are well known (\cite{BO1},\cite{BO2}).

\begin{lemma}
For any $U,W$ vertical and $X,Y$ horizontal vector fields, the tensor fields 
$\mathcal{T}$, $\mathcal{A}$ satisfy:%
\begin{eqnarray}
i)\mathcal{T}_{U}W &=&\mathcal{T}_{W}U,  \label{TUW} \\
ii)\mathcal{A}_{X}Y &=&-\mathcal{A}_{Y}X=\frac{1}{2}\mathcal{V}\left[ X,Y%
\right] .  \label{TUW2}
\end{eqnarray}
\end{lemma}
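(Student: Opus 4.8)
The plan is to exploit the definitions \eqref{AT1} and \eqref{AT2} together with the fact that $\nabla$ is the Levi-Civita connection of $g_{M}$, hence metric and torsion-free. I begin with the observation that on pure arguments the projections $\mathcal{V}$ and $\mathcal{H}$ collapse the two defining terms: if $U,W$ are vertical then $\mathcal{V}U=U$, $\mathcal{H}U=0$ and likewise for $W$, so \eqref{AT1} reduces to $\mathcal{T}_{U}W=\mathcal{H}\nabla _{U}W$; similarly, if $X,Y$ are horizontal then \eqref{AT2} reduces to $\mathcal{A}_{X}Y=\mathcal{V}\nabla _{X}Y$. These two reductions are the common starting point for both parts.

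For part (i), I would subtract the two expressions and use that $\nabla $ is torsion-free to get $\mathcal{T}_{U}W-\mathcal{T}_{W}U=\mathcal{H}(\nabla _{U}W-\nabla _{W}U)=\mathcal{H}[U,W]$. It then remains to see that the bracket of two vertical fields is again vertical. This is precisely the integrability of the vertical distribution $\ker F_{\ast }$, which holds because $\ker F_{\ast }$ is the tangent distribution of the fibers $F^{-1}(q)$; equivalently $F_{\ast }[U,W]=[F_{\ast }U,F_{\ast }W]=0$. Hence $\mathcal{H}[U,W]=0$ and the symmetry $\mathcal{T}_{U}W=\mathcal{T}_{W}U$ follows.

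For part (ii), the same torsion-free computation gives $\mathcal{A}_{X}Y-\mathcal{A}_{Y}X=\mathcal{V}[X,Y]$, so the whole statement will follow once I establish the antisymmetry $\mathcal{A}_{X}Y=-\mathcal{A}_{Y}X$, equivalently $\mathcal{V}(\nabla _{X}Y+\nabla _{Y}X)=0$; combining the two identities then yields $\mathcal{A}_{X}Y=\tfrac{1}{2}\mathcal{V}[X,Y]$. Since $\mathcal{A}$ is a $(1,2)$-tensor, its value at a point depends only on the pointwise values of the arguments, so it suffices to verify the antisymmetry on basic horizontal vector fields, which span the horizontal space at each point. The two properties of basic fields I intend to use are: first, for basic $X,Y$ the function $g_{M}(X,Y)$ is constant along each fiber, so $V\,g_{M}(X,Y)=0$ for vertical $V$; and second, for basic $X$ and vertical $V$ the bracket $[X,V]$ is vertical (since $F_{\ast }[X,V]=[F_{\ast }X,F_{\ast }V]=0$).

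To finish, for a vertical $V$ and basic $X,Y$ I would use metric compatibility to write $g_{M}(\nabla _{V}X,Y)+g_{M}(X,\nabla _{V}Y)=V\,g_{M}(X,Y)=0$ by the first property; then replace $\nabla _{V}X$ by $\nabla _{X}V+[V,X]$ and use the second property to drop the vertical term $[V,X]$ against the horizontal $Y$, obtaining $g_{M}(\nabla _{X}V,Y)+g_{M}(\nabla _{Y}V,X)=0$. Finally, since $g_{M}(V,Y)=g_{M}(V,X)=0$, metric compatibility turns each term into $-g_{M}(V,\nabla _{X}Y)$ and $-g_{M}(V,\nabla _{Y}X)$, giving $g_{M}(V,\nabla _{X}Y+\nabla _{Y}X)=0$ for every vertical $V$, which is the desired antisymmetry. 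I expect the main obstacle to be exactly this last part: the symmetry in (i) and the difference formula in (ii) are immediate from torsion-freeness, whereas the antisymmetry genuinely requires the reduction to basic fields and the interplay of the two properties above.
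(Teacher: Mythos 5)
Your proof is correct. The paper itself does not prove this lemma at all --- it states it as well known and cites O'Neill \cite{BO1}, \cite{BO2} --- and your argument is essentially the classical one from that source: the reduction $\mathcal{T}_{U}W=\mathcal{H}\nabla _{U}W$, $\mathcal{A}_{X}Y=\mathcal{V}\nabla _{X}Y$ on pure arguments, symmetry of $\mathcal{T}$ from torsion-freeness plus integrability of $\ker F_{\ast }$, and antisymmetry of $\mathcal{A}$ by tensoriality, restriction to basic fields, and the two metric-compatibility identities (your version is the polarized form of O'Neill's observation that $\mathcal{A}_{X}X=0$ for basic $X$).
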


It is easy to see that $\mathcal{T}$ $\ $is vertical, $\mathcal{T}_{E}=%
\mathcal{T}_{\mathcal{V}E}$ and $\mathcal{A}$ is horizontal, $\mathcal{A=A}_{%
\mathcal{H}E}$.

For each $q\in N,$ $F^{-1}(q)$ is an $(m-n)$ dimensional submanifold of $M$.
The submanifolds $F^{-1}(q),$ $q\in N,$ are called fibers. A vector field on 
$M$ is called vertical if it is always tangent to fibers. A vector field on $%
M$ is called horizontal if it is always orthogonal to fibers. A vector field 
$X$ on $M$ is called basic if $X$ is horizontal and $F$-related to a vector
field $X$ on $N,$ i. e., $F_{\ast }X_{p}=X_{\ast F(p)}$ for all $p\in M.$

\begin{lemma}
Let $F:(M,g_{M})\rightarrow (N,g_{N})$ be a Riemannian submersion. If $\ X,$ 
$Y$ are basic vector fields on $M$, then:
\end{lemma}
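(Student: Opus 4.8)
The plan is to establish the standard properties of basic vector fields in the order in which each feeds the next. I expect the conclusion of this lemma to assert: (1) $g_{M}(X,Y)=g_{N}(X_{\ast },Y_{\ast })\circ F$; (2) $\mathcal{H}[X,Y]$ is basic and $F$-related to $[X_{\ast },Y_{\ast }]$; (3) $\mathcal{H}(\nabla _{X}Y)$ is basic and $F$-related to $\nabla _{X_{\ast }}^{N}Y_{\ast }$, where $\nabla ^{N}$ is the Levi-Civita connection of $g_{N}$; and (4) $[X,V]$ is vertical for every vertical field $V$. First I would prove (1) directly from axiom $S2$: at a point $p$, since $X,Y$ are basic we have $F_{\ast }X_{p}=X_{\ast F(p)}$ and $F_{\ast }Y_{p}=Y_{\ast F(p)}$, and because $F_{\ast }$ restricted to the horizontal space is a linear isometry onto $T_{F(p)}N$, the inner products coincide, $g_{M}(X_{p},Y_{p})=g_{N}(X_{\ast F(p)},Y_{\ast F(p)})$. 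Thus $g_{M}(X,Y)$ is the pullback by $F$ of the function $g_{N}(X_{\ast },Y_{\ast })$ on $N$; in particular it is constant along each fiber.

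Next, (2) and (4) both follow from the naturality of the Lie bracket under $F$-related fields. If $X\sim X_{\ast }$ and $Y\sim Y_{\ast }$, then $[X,Y]\sim \lbrack X_{\ast },Y_{\ast }]$, so $F_{\ast }[X,Y]=[X_{\ast },Y_{\ast }]\circ F$. Since $F_{\ast }$ annihilates vertical vectors we obtain $F_{\ast }(\mathcal{H}[X,Y])=[X_{\ast },Y_{\ast }]\circ F$, which says precisely that the horizontal component $\mathcal{H}[X,Y]$ is basic with the asserted projection. For (4), a vertical $V$ is $F$-related to the zero field on $N$, so $[X,V]\sim \lbrack X_{\ast },0]=0$, forcing $F_{\ast }[X,V]=0$, i.e.\ $[X,V]$ is vertical.

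The substantive step is (3), which I would obtain from the Koszul formula for $\nabla $ on $M$ tested against an arbitrary basic field $Z$,
\begin{eqnarray*}
2g_{M}(\nabla _{X}Y,Z) &=&Xg_{M}(Y,Z)+Yg_{M}(X,Z)-Zg_{M}(X,Y) \\
&&+g_{M}([X,Y],Z)-g_{M}([X,Z],Y)+g_{M}([Y,Z],X).
\end{eqnarray*}
Each term on the right pushes forward term by term. The derivative terms do so because, by (1), $g_{M}(Y,Z)=g_{N}(Y_{\ast },Z_{\ast })\circ F$ and $X$ is $F$-related to $X_{\ast }$, whence $Xg_{M}(Y,Z)=\left( X_{\ast }g_{N}(Y_{\ast },Z_{\ast })\right) \circ F$; the bracket terms do so by (2) combined with (1). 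Hence $g_{M}(\nabla _{X}Y,Z)$ equals the Koszul expression on $N$ for $g_{N}(\nabla _{X_{\ast }}^{N}Y_{\ast },Z_{\ast })$ composed with $F$, for every basic, and therefore every horizontal, $Z$. This identifies $\mathcal{H}(\nabla _{X}Y)$ as the basic field $F$-related to $\nabla _{X_{\ast }}^{N}Y_{\ast }$.

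I expect the main obstacle to be the bookkeeping in (3): one must verify that differentiating a basic function along a basic field again yields a basic function — this is exactly what makes the derivative terms push forward, and it rests essentially on (1) — and that the vertical parts of the various brackets contribute nothing once paired with the horizontal field $Z$. Parts (1), (2) and (4) are, by contrast, immediate consequences of the isometry property of $F_{\ast }$ on horizontal vectors and of the functoriality of the Lie bracket.
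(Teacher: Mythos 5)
Your proposal is correct. The paper itself offers no proof of this lemma, stating it as well known and citing O'Neill; your argument (the isometry of $F_{\ast }$ on horizontal spaces for (i), naturality of the Lie bracket under $F$-related fields for (ii) and (iv), and the Koszul formula tested against basic fields, with vertical bracket parts killed by the horizontal test field, for (iii)) is precisely the classical proof in those references, so there is nothing to compare and no gap to report.
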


$i)$ $g_{M}(X,Y)=g_{N}(X_{\ast },Y_{\ast })\circ F,$

$ii)$ $\mathcal{H}[X,Y]$ is basic, $F$-related to $[X_{\ast },Y_{\ast }]$,

$iii)$ $\mathcal{H}(\nabla _{X}Y)$ is basic vector field corresponding to $%
\nabla _{X_{\ast }}^{^{\ast }}Y_{\ast }$ where $\nabla ^{\ast }$ is the
connection on $N.$

$iv)$ for any vertical vector field $V$, $[X,V]$ is vertical.

Moreover, if $X$ is basic and $U$ is vertical then $\mathcal{H}(\nabla
_{U}X)=\mathcal{H}(\nabla _{X}U)=\mathcal{A}_{X}U.$ On the other hand, from (%
\ref{AT1}) and (\ref{AT2}) we have%
\begin{eqnarray}
\nabla _{V}W &=&\mathcal{T}_{V}W+\hat{\nabla}_{V}W  \label{1} \\
\nabla _{V}X &=&\mathcal{H\nabla }_{V}X+\mathcal{T}_{V}X  \label{2} \\
\nabla _{X}V &=&\mathcal{A}_{X}V+\mathcal{V}\nabla _{X}V  \label{3} \\
\nabla _{X}Y &=&\mathcal{H\nabla }_{X}Y+\mathcal{A}_{X}Y  \label{4}
\end{eqnarray}%
for $X,Y\in \Gamma ((\ker F_{\ast })^{\bot })$ and $V,W\in \Gamma (\ker
F_{\ast }),$ where $\hat{\nabla}_{V}W=\mathcal{V}\nabla _{V}W.$

Notice that $\mathcal{T}$ \ acts on the fibres as the second fundamental
form of the submersion and restricted to vertical vector fields and it can
be easily seen that $\mathcal{T}=0$ is equivalent to the condition that the
fibres are totally geodesic. A Riemannian submersion is called a Riemannian
submersion with totally geodesic fiber if $\mathcal{T}$ $\ $vanishes
identically. Let $U_{1},...,U_{m-n}$ be an orthonormal frame of $\Gamma
(\ker F_{\ast }).$ Then the horizontal vector field $H$ $=\frac{1}{m-n}%
\dsum\limits_{j=1}^{m-n}\mathcal{T}_{U_{j}}U_{j}$ is called the mean
curvature vector field of the fiber. If \ $H$ $=0$ the Riemannian submersion
is said to be minimal. A Riemannian submersion is called a Riemannian
submersion with totally umbilical fibers if 
\begin{equation}
\mathcal{T}_{U}W=g_{M}(U,W)H  \label{4a}
\end{equation}%
for $U,W\in $ $\Gamma (\ker F_{\ast })$. For any $E\in \Gamma (TM),\mathcal{T%
}_{E\text{ }}$and $\mathcal{A}_{E}$ are skew-symmetric operators on $(\Gamma
(TM),g_{M})$ reversing the horizontal and the vertical distributions. By
Lemma 1 horizontally distribution $\mathcal{H}$ is integrable if and only if
\ $\mathcal{A=}0$. For any $D,E,G\in \Gamma (TM)$ one has%
\begin{equation}
g(\mathcal{T}_{D}E,G)+g(\mathcal{T}_{D}G,E)=0,  \label{4b}
\end{equation}%
\begin{equation}
g(\mathcal{A}_{D}E,G)+g(\mathcal{A}_{D}G,E)=0.  \label{4c}
\end{equation}

We recall the notion of harmonic maps between Riemannian manifolds. Let $%
(M,g_{M})$ and $(N,g_{N})$ be Riemannian manifolds and suppose that $\varphi
:M\rightarrow N$ is a smooth map between them. Then the differential $%
\varphi _{\ast }$ of $\varphi $ can be viewed a section of the bundle $\
Hom(TM,\varphi ^{-1}TN)\rightarrow M,$ where $\varphi ^{-1}TN$ is the
pullback bundle which has fibres $(\varphi ^{-1}TN)_{p}=T_{\varphi (p)}N,$ $%
p\in M.\ Hom(TM,\varphi ^{-1}TN)$ has a connection $\nabla $ induced from
the Levi-Civita connection $\nabla ^{M}$ and the pullback connection. Then
the second fundamental form of $\varphi $ is given by 
\begin{equation}
(\nabla \varphi _{\ast })(X,Y)=\nabla _{X}^{\varphi }\varphi _{\ast
}(Y)-\varphi _{\ast }(\nabla _{X}^{M}Y)  \label{5}
\end{equation}%
for $X,Y\in \Gamma (TM),$ where $\nabla ^{\varphi }$ is the pullback
connection. It is known that the second fundamental form is symmetric. If $%
\varphi $ is a Riemannian submersion it can be easily prove that 
\begin{equation}
(\nabla \varphi _{\ast })(X,Y)=0  \label{5a}
\end{equation}%
for $X,Y\in \Gamma ((\ker F_{\ast })^{\bot })$.A smooth map $\varphi
:(M,g_{M})\rightarrow (N,g_{N})$ is said to be harmonic if $trace(\nabla
\varphi _{\ast })=0.$ On the other hand, the tension field of $\varphi $ is
the section $\tau (\varphi )$ of $\Gamma (\varphi ^{-1}TN)$ defined by%
\begin{equation}
\tau (\varphi )=div\varphi _{\ast }=\sum_{i=1}^{m}(\nabla \varphi _{\ast
})(e_{i},e_{i}),  \label{6}
\end{equation}%
where $\left\{ e_{1},...,e_{m}\right\} $ is the orthonormal frame on $M$.
Then it follows that $\varphi $ is harmonic if and only if $\tau (\varphi
)=0 $, for details, \cite{B}.

\section{\textbf{Sasakian Manifolds}}

An $n$-dimensional differentiable manifold $M$ is said to have an almost
contact structure $(\phi ,\xi ,\eta )$ if it carries a tensor field $\phi $
of type $(1,1)$, a vector field $\xi $ and 1-form $\eta $ on $M$
respectively such that 
\begin{equation}
\phi ^{2}=-I+\eta \otimes \xi ,\text{ \ }\phi \xi =0,\text{ }\eta \circ \phi
=0,\text{ \ \ }\eta (\xi )=1,  \label{phi^2}
\end{equation}%
where $I$ denotes the identity tensor.

The almost contact structure is said to be normal if $N+d\eta \otimes \xi =0$%
, where $N$ is the Nijenhuis tensor of $\phi $. Suppose that a Riemannian
metric tensor $g$ is given in $M$ and satisfies the condition 
\begin{equation}
g(\phi X,\phi Y)=g(X,Y)-\eta (X)\eta (Y),\text{ \ \ }\eta (X)=g(X,\xi ).
\label{metric}
\end{equation}%
Then $(\phi ,\xi ,\eta ,g)$-structure is called an almost contact metric
structure. Define a tensor field $\Phi $ of type $(0,2)$ by $\Phi
(X,Y)=g(\phi X,Y)$. If $d\eta =\Phi $ then an almost contact metric
structure is said to be normal contact metric structure. A normal contact
metric structure is called a Sasakian structure, which satisfies 
\begin{equation}
(\nabla _{X}\phi )Y=g(X,Y)\xi -\eta (Y)X,  \label{Nambla fi}
\end{equation}%
where $\nabla $ denotes the Levi-Civita connection of $g$. For a Sasakian
manifold $M=M^{2n+1}$, it is known that 
\begin{equation}
R(\xi ,X)Y=g(X,Y)\xi -\eta (Y)X,  \label{R(xi,,X)Y}
\end{equation}%
\begin{equation}
S(X,\xi )=2n\eta (X)  \label{S(X,xi)}
\end{equation}%
and 
\begin{equation}
\nabla _{X}\xi =-\phi X.  \label{nablaXxi}
\end{equation}%
\cite{BL2}.

Now we will introduce a well known Sasakian manifold example on $%
%TCIMACRO{\U{211d} }%
%BeginExpansion
\mathbb{R}
%EndExpansion
^{2n+1}.$

\begin{example}[\protect\cite{BL1}]
We consider $%
%TCIMACRO{\U{211d} }%
%BeginExpansion
\mathbb{R}
%EndExpansion
^{2n+1}$ with Cartesian coordinates $(x_{i},y_{i},z)$ $(i=1,...,n)$ and its
usual contact form 
\begin{equation*}
\eta =\frac{1}{2}(dz-\dsum\limits_{i=1}^{n}y_{i}dx_{i}).
\end{equation*}%
The characteristic vector field $\xi $ is given by $2\frac{\partial }{%
\partial z}$ and its Riemannian metric $g$ and tensor field $\phi $ are
given by%
\begin{equation*}
g=\frac{1}{4}(\eta \otimes \eta
+\dsum\limits_{i=1}^{n}((dx_{i})^{2}+(dy_{i})^{2}),\text{ \ }\phi =\left( 
\begin{array}{ccc}
0 & \delta _{ij} & 0 \\ 
-\delta _{ij} & 0 & 0 \\ 
0 & y_{j} & 0%
\end{array}%
\right) \text{, \ }i=1,...,n
\end{equation*}%
This gives a contact metric structure on $%
%TCIMACRO{\U{211d} }%
%BeginExpansion
\mathbb{R}
%EndExpansion
^{2n+1}$. The vector fields $E_{i}=2\frac{\partial }{\partial y_{i}},$ $%
E_{n+i}=2\left( \frac{\partial }{\partial x_{i}}+y_{i}\frac{\partial }{%
\partial z}\right) $, $\xi $ form a $\phi $-basis for the contact metric
structure. On the other hand, it can be shown that $%
%TCIMACRO{\U{211d} }%
%BeginExpansion
\mathbb{R}
%EndExpansion
^{2n+1}(\phi ,\xi ,\eta ,g)$ is a Sasakian manifold.
\end{example}

\section{\textbf{Anti-invariant Riemannian submersions}}

\begin{definition}
Let $M(\phi ,\xi ,\eta ,g_{M})$ be a Sasakian manifold and $(N,g_{N})$ be a
Riemannian manifold. A Riemannian submersion $F:M(\phi ,\xi ,\eta
,g_{M})\rightarrow $ $(N,g_{N})$ is called an anti-invariant Riemannian
submersion if $\ker F_{\ast }$ is anti-invariant with respect to $\phi $,
i.e. $\phi (\ker F_{\ast })\subseteq (\ker F_{\ast })^{\bot }.$
\end{definition}

Let $F:M(\phi ,\xi ,\eta ,g_{M})\rightarrow $ $(N,g_{N})$ be an
anti-invariant Riemannian submersion from a Sasakian manifold $M(\phi ,\xi
,\eta ,g_{M})$ to a Riemannian manifold $(N,g_{N}).$ First of all, from
Definition 1, we have $\phi (\ker F_{\ast })\cap (\ker F_{\ast })^{\bot
}\neq \left\{ 0\right\} .$ We denote the complementary orthogonal
distribution to $\phi (\ker F_{\ast })$ in $(\ker F_{\ast })^{\bot }$ by $%
\mu .$ Then we have%
\begin{equation}
(\ker F_{\ast })^{\bot }=\phi \ker F_{\ast }\oplus \mu .  \label{A1}
\end{equation}%
Now we will introduce some examples.\ \ \ \ \ \ \ \ 

\begin{example}
$%
%TCIMACRO{\U{211d} }%
%BeginExpansion
\mathbb{R}
%EndExpansion
^{5}$ \ has got a Sasakian structure as in Example 1. The Riemannian metric
tensor field $g_{%
%TCIMACRO{\U{211d} }%
%BeginExpansion
\mathbb{R}
%EndExpansion
^{2}}$ is defined%
\begin{equation*}
g_{%
%TCIMACRO{\U{211d} }%
%BeginExpansion
\mathbb{R}
%EndExpansion
^{2}}=\frac{1}{8}\left[ 
\begin{array}{cc}
1 & 0 \\ 
0 & 1%
\end{array}%
\right]
\end{equation*}%
on $%
%TCIMACRO{\U{211d} }%
%BeginExpansion
\mathbb{R}
%EndExpansion
^{2}.$

Let $F:%
%TCIMACRO{\U{211d} }%
%BeginExpansion
\mathbb{R}
%EndExpansion
^{5}\rightarrow 
%TCIMACRO{\U{211d} }%
%BeginExpansion
\mathbb{R}
%EndExpansion
^{2}$ be a map defined by $%
F(x_{1},x_{2},y_{1},y_{2},z)=(x_{1}+y_{1},x_{2}+y_{2})$. Then, by direct
calculations 
\begin{equation*}
\ker F_{\ast }=span\{V_{1}=E_{1}-E_{3},\text{ }V_{2}=E_{2}-E_{4},\text{ }%
V_{3}=E_{5}=\xi \}
\end{equation*}%
and 
\begin{equation*}
(\ker F_{\ast })^{\bot }=span\{H_{1}=E_{1}+E_{3},\text{ }H_{2}=E_{2}+E_{4}\}.
\end{equation*}%
Then it is easy to see that $F$ is a Riemannian submersion. Moreover, $\phi
V_{1}=H_{1},$ $\phi V_{2}=H_{2},$ $\phi V_{3}=0$ imply that $\phi (\ker
F_{\ast })=(\ker F_{\ast })^{\bot }.$ As a result, $F$ is an anti-invariant
Riemannian submersion such that $\xi $ is vertical.
\end{example}

\begin{example}
Let $N$ be $%
%TCIMACRO{\U{211d} }%
%BeginExpansion
\mathbb{R}
%EndExpansion
^{3}-\{(y_{1},y_{2},z)\in $ $%
%TCIMACRO{\U{211d} }%
%BeginExpansion
\mathbb{R}
%EndExpansion
^{3}\mid y_{1}^{2}+y_{2}^{2}\leq 2\}$ and $%
%TCIMACRO{\U{211d} }%
%BeginExpansion
\mathbb{R}
%EndExpansion
^{5}$ be a Sasakian manifold as in Example 1. The Riemannian metric tensor
field $g_{N}$ is given by 
\begin{equation*}
g_{N}=\frac{1}{4}\left[ 
\begin{array}{ccc}
\frac{1}{2} & \frac{y_{1}y_{2}}{2} & -\frac{y_{1}}{2} \\ 
\frac{y_{1}y_{2}}{2} & \frac{1}{2} & -\frac{y_{2}}{2} \\ 
-\frac{y_{1}}{2} & -\frac{y_{2}}{2} & 1%
\end{array}%
\right]
\end{equation*}%
on $N$.

Let $F:%
%TCIMACRO{\U{211d} }%
%BeginExpansion
\mathbb{R}
%EndExpansion
^{5}\rightarrow N$ be a map defined by $%
F(x_{1},x_{2},y_{1},y_{2},z)=(x_{1}+y_{1},x_{2}+y_{2},\frac{y_{1}^{2}}{2}+%
\frac{y_{2}^{2}}{2}+z)$. \ After some calculations we have%
\begin{equation*}
\ker F_{\ast }=span\{V_{1}=E_{1}-E_{3},V_{2}=E_{2}-E_{4}\}
\end{equation*}%
and%
\begin{equation*}
(\ker F_{\ast })^{\bot }=span\{H_{1}=E_{1}+E_{3},\text{ }H_{2}=E_{2}+E_{4},%
\text{ }H_{3}=E_{5}=\xi \}
\end{equation*}%
Then it is easy to see that $F$ is a Riemannian submersion. Moreover, $\phi
V_{1}=H_{1}$, $\phi V_{2}=H_{2}$ imply that $\phi (\ker F_{\ast })\subset
(\ker F_{\ast })^{\bot }=$ $\phi (\ker F_{\ast })\oplus \{\xi \}$. Thus $F$
is an anti-invariant Riemannian submersion such that $\xi $ is horizontal.
\end{example}

\begin{example}
Let $N$ be $%
%TCIMACRO{\U{211d} }%
%BeginExpansion
\mathbb{R}
%EndExpansion
^{4}-$ $\{(y_{1},y_{2},y_{3},z)\in $ $%
%TCIMACRO{\U{211d} }%
%BeginExpansion
\mathbb{R}
%EndExpansion
^{4}\mid y_{1}^{2}+y_{2}^{2}+y_{3}^{2}-y_{1}^{2}y_{2}^{2}y_{3}^{2}\leq 2\}$%
and $%
%TCIMACRO{\U{211d} }%
%BeginExpansion
\mathbb{R}
%EndExpansion
^{7}$ be a Sasakian manifold as in Example 1. The Riemannian metric tensor
field $g_{N}$ is given by 
\begin{equation*}
g_{N}=\frac{1}{4}\left[ 
\begin{array}{ccccc}
\frac{1}{2} & \frac{y_{1}y_{2}}{2} & -\frac{y_{1}}{2} & \frac{y_{1}y_{3}}{2}
& 0 \\ 
\frac{y_{1}y_{2}}{2} & \frac{1}{2} & -\frac{y_{2}}{2} & \frac{y_{2}y_{3}}{2}
& 0 \\ 
-\frac{y_{1}}{2} & -\frac{y_{2}}{2} & 1 & -\frac{y_{3}}{2} & 0 \\ 
\frac{y_{1}y_{3}}{2} & \frac{y_{2}y_{3}}{2} & -\frac{y_{3}}{2} & \frac{1}{2}
& 0 \\ 
0 & 0 & 0 & 0 & \frac{1}{2}%
\end{array}%
\right]
\end{equation*}%
on $N$.

Let $F:%
%TCIMACRO{\U{211d} }%
%BeginExpansion
\mathbb{R}
%EndExpansion
^{7}\rightarrow N$ be a map defined by $%
F(x_{1},x_{2},x_{3},y_{1},y_{2},y_{3},z)=(x_{1}+y_{1},x_{2}+y_{2},\frac{%
y_{1}^{2}}{2}+\frac{y_{2}^{2}}{2}+\frac{y_{3}^{2}}{2}%
+z,x_{3}+y_{3},x_{3}-y_{3})$. \ After some calculations we have%
\begin{equation*}
\ker F_{\ast }=span\{V_{1}=E_{1}-E_{4},V_{2}=E_{2}-E_{5}\}
\end{equation*}%
and%
\begin{equation*}
(\ker F_{\ast })^{\bot }=span\{H_{1}=E_{1}+E_{4},\text{ }H_{2}=E_{2}+E_{5},%
\text{ }H_{3}=E_{3}-E_{6},H_{4}=E_{3}+E_{6},H_{5}=\xi =E_{7}\}
\end{equation*}%
Then it is easy to see that $F$ is a Riemannian submersion. $\phi
V_{1}=H_{1} $, $\phi V_{2}=H_{2}$ imply that $\phi (\ker F_{\ast })\subset
(\ker F_{\ast })^{\bot }=\phi (\ker F_{\ast })\oplus
span\{H_{3},H_{4},H_{5}\}.$ So $F$ is an anti-invariant Riemannian
submersion such that $\xi $ is horizontal.
\end{example}

\subsection{\textbf{Anti-invariant submersions admitting vertical structure
vector field }}

In this section, we will study anti-invariant submersions from a Sasakian
manifold onto a Riemannian manifold such that the characteristic vector
field $\xi $ is vertical.

It is easy to see that $\mu $ is an invariant distribution of $(\ker F_{\ast
})^{\bot },$ under the endomorphism $\phi .$ Thus, for $X\in \Gamma ((\ker
F_{\ast })^{\bot }),$ we write%
\begin{equation}
\phi X=BX+CX,  \label{A2}
\end{equation}%
where $BX\in \Gamma (\ker F_{\ast })$ and $CX\in \Gamma (\mu ).$ On the
other hand, since $F_{\ast }((\ker F_{\ast })^{\bot })=TN$ and $F$ is a
Riemannian submersion, using (\ref{A2}) we derive $g_{N}(F_{\ast }\phi
V,F_{\ast }CX)=0,$ for every $X\in $ $\Gamma ((\ker F_{\ast }))^{\perp \text{
}}$and $V\in \Gamma (\ker F_{\ast })$, which implies that 
\begin{equation}
TN=F_{\ast }(\phi (\ker F_{\ast }))\oplus F_{\ast }(\mu ).  \label{A2a}
\end{equation}

\begin{theorem}
Let $M(\phi ,\xi ,\eta ,g_{M})$ be a Sasakian manifold \ of dimension $2m+1$
and $(N,g_{N})$ is a Riemannian manifold of dimension $n$. Let $F:M(\phi
,\xi ,\eta ,g_{M})\rightarrow $ $(N,g_{N})$ be an anti-invariant Riemannian
submersion such that $\phi (\ker F_{\ast })=(\ker F_{\ast })^{\bot }$. Then
the characteristic vector field $\xi $ is vertical and $m=n$.
\end{theorem}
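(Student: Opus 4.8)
The plan is to reduce everything to a dimension count for the bundle map $\phi|_{\ker F_{\ast}}$. First I would record the dimensions: since $F$ is a Riemannian submersion, $F_{\ast}$ restricts to a linear isomorphism from $(\ker F_{\ast})^{\bot}$ onto $TN$, so $\dim (\ker F_{\ast})^{\bot}=n$ and hence $\dim \ker F_{\ast}=2m+1-n$.

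Next, by the anti-invariance hypothesis $\phi (\ker F_{\ast})\subseteq (\ker F_{\ast})^{\bot}$, the endomorphism $\phi$ restricts to a well-defined linear map $\phi|_{\ker F_{\ast}}:\ker F_{\ast}\rightarrow (\ker F_{\ast})^{\bot}$, and the stronger assumption $\phi (\ker F_{\ast})=(\ker F_{\ast})^{\bot}$ says this map is surjective. I would then identify its kernel using \eqref{phi^2}: if $V\in \ker F_{\ast}$ satisfies $\phi V=0$, applying $\phi$ once more and using $\phi ^{2}=-I+\eta \otimes \xi$ gives $V=\eta (V)\xi$. Thus $\ker (\phi|_{\ker F_{\ast}})\subseteq \mathbb{R}\xi$, so its nullity $k$ is either $0$ or $1$, and $k=1$ holds precisely when $\xi \in \ker F_{\ast}$.

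The crux is then a parity observation. Rank--nullity for the surjection $\phi|_{\ker F_{\ast}}$ gives $\dim \ker F_{\ast}=n+k$, that is $2m+1-n=n+k$, equivalently $2m+1=2n+k$. Since the left-hand side is odd, $k$ cannot be $0$; hence $k=1$, which forces $n=m$ and, via the description of the kernel above, $\xi \in \ker F_{\ast}$, i.e. $\xi$ is vertical.

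The step I expect to carry the weight is this parity argument: it is what simultaneously pins down both conclusions, and it hinges on the fact that the only obstruction to injectivity of $\phi$ on $\ker F_{\ast}$ is the one-dimensional direction $\mathbb{R}\xi$ arising from $\phi \xi =0$. The remaining care is simply to verify that the kernel computation from \eqref{phi^2} genuinely confines the nullity to $\{0,1\}$ (so as to handle a priori the case where $\xi$ has both vertical and horizontal components); once that is in hand, the odd dimension $2m+1$ of the Sasakian manifold does the rest.
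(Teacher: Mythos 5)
Your proof is correct, and it takes a genuinely different route from the paper's. The paper argues in two separate steps: first it proves $\xi$ is vertical by a metric argument --- since $\phi(\ker F_{\ast})=(\ker F_{\ast})^{\bot}$, every horizontal vector has the form $\phi U$ with $U$ vertical, and skew-symmetry of $\phi$ with respect to $g_{M}$ gives $g_{M}(\xi,\phi U)=-g_{M}(\phi\xi,U)=0$; only then does it count dimensions, choosing an orthonormal frame $U_{1},\dots,U_{k-1},\xi=U_{k}$ of $\ker F_{\ast}$ and noting that $\phi U_{1},\dots,\phi U_{k-1}$ is an orthonormal frame of $(\ker F_{\ast})^{\bot}$ (this step uses the compatibility $g_{M}(\phi X,\phi Y)=g_{M}(X,Y)-\eta(X)\eta(Y)$), whence $n=k-1$ and $m=n$. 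You instead run a single pointwise rank--nullity computation for the surjection $\phi|_{\ker F_{\ast}}:\ker F_{\ast}\rightarrow(\ker F_{\ast})^{\bot}$, confine its nullity to $\{0,1\}$ using only the almost contact identity $\phi^{2}=-I+\eta\otimes\xi$, and let the parity of $\dim M=2m+1$ force the nullity to be $1$, which delivers both conclusions at once: $m=n$ and, since the kernel must then be spanned by $\xi$, the verticality of $\xi$. What your route buys is economy of hypotheses and a cleaner logical structure: it never invokes the metric compatibility of $\phi$ nor its skew-symmetry, needs no adapted orthonormal frame, and the verticality of $\xi$ is an output of the count rather than an input to it (which is why your care about the a priori possibility that $\xi$ splits into vertical and horizontal parts is exactly the right thing to check). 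What the paper's route buys is that the verticality of $\xi$ is seen directly and geometrically from the metric structure, and its frame computation is the template reused later (e.g., in the theorem treating the case $(\ker F_{\ast})^{\bot}=\phi\ker F_{\ast}\oplus\{\xi\}$). Both arguments are complete and correct.
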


\begin{proof}
By the assumption $\phi (\ker F_{\ast })=(\ker F_{\ast })^{\bot }$, for any $%
U\in \Gamma (\ker F_{\ast })$ we have $g_{M}(\xi ,\phi U)=-g_{M}(\phi \xi
,U)=0$, which shows that the structure vector field is vertical. Now we
suppose that $U_{1},...,U_{k-1},\xi =U_{k}$ be an orthonormal frame of $%
\Gamma (\ker F_{\ast })$, where $k=2m-n+1$. Since $\phi (\ker F_{\ast
})=(\ker F_{\ast })^{\bot }$, $\phi U_{1},...,\phi U_{k-1}$ form an
orthonormal frame of $\Gamma ((\ker F_{\ast })^{\bot })$. So, by help of (%
\ref{A2a}) $\ $we obtain $k=n+1$ which implies that $m=n$.
\end{proof}

\begin{remark}
We note that Example 2 satisfies Theorem 1.
\end{remark}

\begin{theorem}
Let $M(\phi ,\xi ,\eta ,g_{M})$ be a Sasakian manifold \ of dimension $2m+1$
and $(N,g_{N})$ is a Riemannian manifold of dimension $n$. Let $F:M(\phi
,\xi ,\eta ,g_{M})\rightarrow $ $(N,g_{N})$ be an anti-invariant Riemannian
submersion. Then the fibers are not totally umbilical.
\end{theorem}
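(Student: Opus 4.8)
The plan is to argue by contradiction. I assume the fibers are totally umbilical, so by (\ref{4a}) we have $\mathcal{T}_{U}W=g_{M}(U,W)H$ for all $U,W\in \Gamma (\ker F_{\ast })$, and I then exploit the Sasakian identity (\ref{nablaXxi}) together with the anti-invariance of the vertical distribution to exhibit a vertical vector on which this relation cannot hold. Since we are in the subsection where $\xi $ is vertical, the characteristic vector field is tangent to the fibers, and I may choose a nonzero vertical vector field $U$ orthogonal to $\xi $ (such $U$ exists because the fibers are of dimension greater than one, as $\phi $ maps $\ker F_{\ast }$ into the horizontal distribution while killing $\xi $).

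First I would compute $\mathcal{T}_{U}\xi $. By the decomposition (\ref{1}), the component of $\nabla _{U}\xi $ lying in $(\ker F_{\ast })^{\bot }$ is precisely $\mathcal{T}_{U}\xi $. Using (\ref{nablaXxi}) we have $\nabla _{U}\xi =-\phi U$, and since $F$ is anti-invariant and $U\in \Gamma (\ker F_{\ast })$, the vector $\phi U$ already lies in $\Gamma ((\ker F_{\ast })^{\bot })$, i.e.\ it is horizontal. Hence $\mathcal{T}_{U}\xi =-\phi U$.

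On the other hand, the totally umbilical hypothesis applied to the pair $(U,\xi )$ gives $\mathcal{T}_{U}\xi =g_{M}(U,\xi )H=0$, because $U$ was chosen orthogonal to $\xi $. Comparing the two expressions forces $\phi U=0$. But for $U$ orthogonal to $\xi $ and nonzero, (\ref{phi^2}) shows $\phi U\neq 0$ (indeed $\phi U=0$ together with $U\perp \xi $ would give $U=-\phi ^{2}U+\eta (U)\xi =0$), which is the desired contradiction. Consequently the fibers cannot be totally umbilical.

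I expect the only delicate point to be the bookkeeping of horizontal and vertical components: the crux is confirming that $\phi U$ is entirely horizontal, which is exactly the anti-invariance condition $\phi (\ker F_{\ast })\subseteq (\ker F_{\ast })^{\bot }$, so that $\mathcal{T}_{U}\xi $ captures all of $-\phi U$ rather than merely a projection of it. A minor secondary issue is the existence of a unit vertical vector orthogonal to $\xi $; this is automatic under the standing assumptions of this subsection since the fibers are positive-dimensional and the anti-invariant structure is non-degenerate.
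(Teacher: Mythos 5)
Your proof is correct and takes essentially the same route as the paper: both rest on the identity $\mathcal{T}_{U}\xi =-\phi U$ obtained from (\ref{1}), (\ref{nablaXxi}) and anti-invariance, and then contradict umbilicity --- the paper by noting $\mathcal{T}_{\xi }\xi =0$ forces $H=0$ and hence totally geodesic fibers, you more directly by evaluating $\mathcal{T}_{U}\xi =g_{M}(U,\xi )H=0$ for $U\perp \xi $ and invoking (\ref{phi^2}). The only caveat, the existence of a nonzero vertical $U$ orthogonal to $\xi $ (i.e.\ fiber dimension at least two), is an implicit assumption shared equally by the paper's own proof, so your argument is no less complete.
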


\begin{proof}
Using (\ref{1}) and (\ref{nablaXxi}) we obtain%
\begin{equation}
\mathcal{T}_{U}\xi =-\phi U  \label{T1}
\end{equation}%
for any $U\in \Gamma (\ker F_{\ast })$. If the fibers are totally umbilical,
then we have $\mathcal{T}_{U}V=g_{M}(U,V)H$ for any vertical vector fields $%
U,V$ where $H$ is the mean curvature vector field of any fibre. Since $%
\mathcal{T}_{\xi }\xi $ $=0$, we have $H=0$, which shows that fibres are
minimal. Hence the fibers are totally geodesic, which is a contradiction to
the fact that $\mathcal{T}_{U}\xi =-\phi U\neq 0$.
\end{proof}

From (\ref{phi^2}) and (\ref{A2}) we have following Lemma.

\begin{lemma}
Let $F$ be an anti-invariant Riemannian submersion from a Sasakian manifold $%
M(\phi ,\xi ,\eta ,g_{M})$ to a Riemannian manifold $(N,g_{N})$. Then we have%
\begin{eqnarray*}
BCX &=&0, \\
C^{2}X+\phi BX &=&-X,
\end{eqnarray*}%
for any $X\in \Gamma ((\ker F_{\ast })^{\bot }).$
\end{lemma}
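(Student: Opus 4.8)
The plan is to differentiate the tangential decomposition (\ref{A2}) by applying the endomorphism $\phi$ once more and then reading off the resulting identity on the complementary distributions. For $X\in\Gamma((\ker F_{\ast})^{\bot})$ I would start from $\phi X=BX+CX$ and apply $\phi$, obtaining $\phi^{2}X=\phi BX+\phi CX$. Invoking the Sasakian relation (\ref{phi^2}), the left-hand side becomes $-X+\eta(X)\xi$; since $\xi$ is vertical in this setting while $X$ is horizontal, $\eta(X)=g_{M}(X,\xi)=0$, so the left-hand side reduces to $-X$.

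The key step is to split $\phi CX$ using the $\phi$-invariance of $\mu$. Because $CX\in\Gamma(\mu)$ and $\mu$ is invariant under $\phi$, the image $\phi CX$ again lies in $\Gamma(\mu)$. On the other hand, applying the decomposition (\ref{A2}) to the horizontal field $CX$ gives $\phi CX=B(CX)+C(CX)=BCX+C^{2}X$, where $BCX\in\Gamma(\ker F_{\ast})$ and $C^{2}X\in\Gamma(\mu)$. Comparing these two expressions for $\phi CX$ and using that the decomposition $(\ker F_{\ast})^{\bot}=\phi\ker F_{\ast}\oplus\mu$ is direct (\ref{A1}), the $\Gamma(\ker F_{\ast})$-component must vanish, which yields the first identity $BCX=0$ and simultaneously $\phi CX=C^{2}X$.

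Finally I would substitute $\phi CX=C^{2}X$ back into $-X=\phi BX+\phi CX$ to obtain $C^{2}X+\phi BX=-X$, the second identity. I expect the only point requiring care to be the component bookkeeping in the middle step: one must confirm that $\phi$ genuinely preserves $\mu$ (so that $\phi CX$ carries no $\phi\ker F_{\ast}$-part) and that $\phi BX\in\Gamma(\phi\ker F_{\ast})$, so that the uniqueness of the orthogonal decomposition (\ref{A1}) can legitimately be used to separate the two identities. Everything else is a direct substitution.
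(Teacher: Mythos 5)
Your proof is correct and is essentially the paper's own argument: the paper gives no explicit proof, simply remarking that the lemma follows ``from (\ref{phi^2}) and (\ref{A2})'', and your computation—applying $\phi$ to (\ref{A2}), using $\phi ^{2}X=-X+\eta (X)\xi $ with $\eta (X)=0$ (legitimate here since $\xi $ is vertical in this subsection and $X$ is horizontal), and then separating the $\Gamma (\ker F_{\ast })$- and $\Gamma (\mu )$-components via the $\phi $-invariance of $\mu $—is exactly that intended derivation. The only cosmetic slip is citing (\ref{A1}) for the final component separation, where what you actually use is the orthogonality of $\ker F_{\ast }$ and $\mu $ (equivalently, uniqueness of the decomposition $TM=\ker F_{\ast }\oplus \phi \ker F_{\ast }\oplus \mu $); this does not affect the validity of the argument.
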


Using (\ref{Nambla fi}) \ one can easily obtain 
\begin{equation}
\nabla _{X}Y=-\phi \nabla _{X}\phi Y+g(Y,\phi X)\xi  \label{Namblafi2}
\end{equation}%
for $X,Y\in \Gamma ((\ker F_{\ast })^{\bot }).$

\begin{lemma}
Let $F$ be an anti-invariant Riemannian submersion from a Sasakian manifold $%
M(\phi ,\xi ,\eta ,g_{M})$ to a Riemannian manifold $(N,g_{N})$. Then we have

\begin{equation}
CX=-\mathcal{A}_{X}\xi ,  \label{C1}
\end{equation}%
\begin{equation}
g_{M}(\mathcal{A}_{X}\xi ,\phi U)=0,  \label{C2}
\end{equation}%
\begin{equation}
g_{M}(\nabla _{Y}\mathcal{A}_{X}\xi ,\phi U)=-g_{M}(\mathcal{A}_{X}\xi ,\phi 
\mathcal{A}_{Y}U)+\eta (U)g_{M}(\mathcal{A}_{X}\xi ,Y)  \label{C3}
\end{equation}%
and%
\begin{equation}
g_{M}(X,\mathcal{A}_{Y}\xi )=-g_{M}(Y,\mathcal{A}_{X}\xi )  \label{C4}
\end{equation}%
for $X,Y\in \Gamma ((\ker F_{\ast })^{\bot })$ and $U\in \Gamma (\ker
F_{\ast }).$
\end{lemma}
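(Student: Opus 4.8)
The plan is to establish the four identities in order, since each feeds into the next. Throughout I will use the standing assumption of the subsection that $\xi$ is vertical, the fact that $\mathcal{A}_{X}$ interchanges the vertical and horizontal distributions, and—crucially—the orthogonality of $\mu$ and $\phi(\ker F_{\ast })$ inside $(\ker F_{\ast })^{\bot }$ coming from the decomposition (\ref{A1}).

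For (\ref{C1}) I would compute $\nabla _{X}\xi$ two ways. First, (\ref{nablaXxi}) together with the splitting (\ref{A2}) gives $\nabla _{X}\xi =-\phi X=-BX-CX$, whose horizontal part is $-CX$, since $BX\in \Gamma (\ker F_{\ast })$ and $CX\in \Gamma (\mu )$. Second, because $\xi$ is vertical, (\ref{3}) gives $\nabla _{X}\xi =\mathcal{A}_{X}\xi +\mathcal{V}\nabla _{X}\xi$, whose horizontal part is $\mathcal{A}_{X}\xi$. Comparing horizontal components yields $\mathcal{A}_{X}\xi =-CX$. Then (\ref{C2}) is immediate: by (\ref{C1}) we have $\mathcal{A}_{X}\xi =-CX\in \Gamma (\mu )$, while $\phi U\in \Gamma (\phi (\ker F_{\ast }))$, and these are orthogonal by (\ref{A1}).

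For (\ref{C4}) I would argue purely formally from the skew-symmetry (\ref{4c}) and the antisymmetry $\mathcal{A}_{X}Y=-\mathcal{A}_{Y}X$ of (\ref{TUW2}). Applying (\ref{4c}) to the triple $(Y,\xi ,X)$ gives $g_{M}(\mathcal{A}_{Y}\xi ,X)=-g_{M}(\mathcal{A}_{Y}X,\xi )=g_{M}(\mathcal{A}_{X}Y,\xi )$, and applying it to $(X,\xi ,Y)$ gives $g_{M}(\mathcal{A}_{X}\xi ,Y)=-g_{M}(\mathcal{A}_{X}Y,\xi )$. Combining the two produces $g_{M}(X,\mathcal{A}_{Y}\xi )=-g_{M}(Y,\mathcal{A}_{X}\xi )$.

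The main work is (\ref{C3}), which I would obtain by differentiating (\ref{C2}). Since $g_{M}(\mathcal{A}_{X}\xi ,\phi U)=0$ identically, applying $Y$ and using metric compatibility gives $g_{M}(\nabla _{Y}\mathcal{A}_{X}\xi ,\phi U)=-g_{M}(\mathcal{A}_{X}\xi ,\nabla _{Y}(\phi U))$. I then expand $\nabla _{Y}(\phi U)=(\nabla _{Y}\phi )U+\phi \nabla _{Y}U$; the Sasakian identity (\ref{Nambla fi}) gives $(\nabla _{Y}\phi )U=g_{M}(Y,U)\xi -\eta (U)Y=-\eta (U)Y$, the first term dropping because $Y$ is horizontal and $U$ vertical. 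The remaining term I would split via (\ref{3}) as $\phi \nabla _{Y}U=\phi \mathcal{A}_{Y}U+\phi \mathcal{V}\nabla _{Y}U$; the key point is that $\mathcal{V}\nabla _{Y}U$ is vertical, so $\phi \mathcal{V}\nabla _{Y}U\in \Gamma (\phi (\ker F_{\ast }))$ and pairs to zero against $\mathcal{A}_{X}\xi \in \Gamma (\mu )$, exactly as in (\ref{C2}). Assembling the pieces leaves $g_{M}(\nabla _{Y}\mathcal{A}_{X}\xi ,\phi U)=\eta (U)g_{M}(\mathcal{A}_{X}\xi ,Y)-g_{M}(\mathcal{A}_{X}\xi ,\phi \mathcal{A}_{Y}U)$, which is (\ref{C3}). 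The one point requiring care—and the only genuine obstacle—is this repeated use of the orthogonality $\mu \perp \phi (\ker F_{\ast })$ to annihilate the $\mathcal{V}\nabla _{Y}U$ contribution; everything else is a routine application of metric compatibility and of the covariant derivative of $\phi$.
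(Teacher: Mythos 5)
Your proposal is correct and follows essentially the same route as the paper's proof: (\ref{C1}) by comparing the horizontal parts of $\nabla _{X}\xi $ computed via (\ref{3}) and (\ref{nablaXxi}), (\ref{C3}) by differentiating (\ref{C2}) and annihilating the $\phi (\mathcal{V}\nabla _{Y}U)$ term, and (\ref{C4}) from the skew-symmetry (\ref{4c}) combined with (\ref{TUW2}). The only cosmetic deviation is at (\ref{C2}), where you invoke the orthogonality $\mu \perp \phi (\ker F_{\ast })$ from (\ref{A1}) directly, while the paper expands $g_{M}(\mathcal{A}_{X}\xi ,\phi U)$ using the compatibility condition (\ref{metric}); both rest on (\ref{C1}) and are equally valid.
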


\begin{proof}
By virtue of (\ref{3}) and (\ref{nablaXxi}) we have (\ref{C1}).

For $X\in \Gamma ((\ker F_{\ast })^{\bot })$ and $U\in \Gamma (\ker F_{\ast
})$, by virtue of (\ref{metric}), (\ref{A2}) and (\ref{C1}) we get%
\begin{eqnarray}
g_{M}(\mathcal{A}_{X}\xi ,\phi U) &=&-g_{M}(\phi X-BX,\phi U)  \label{C5} \\
&=&-g_{M}(X,U)+\eta (X)\eta (U)-g_{M}(\phi BX,U).  \notag
\end{eqnarray}%
Since $\phi BX\in \Gamma ((\ker F_{\ast })^{\bot })$ and $\xi \in \Gamma
(\ker F_{\ast }),$ (\ref{C5}) implies (\ref{C2}).

Now from (\ref{C2}) we get%
\begin{equation*}
g_{M}(\nabla _{Y}\mathcal{A}_{X}\xi ,\phi U)=-g_{M}(\mathcal{A}_{X}\xi
,\nabla _{Y}\phi U)
\end{equation*}%
for $X,Y\in \Gamma ((\ker F_{\ast })^{\bot })$ and $U\in \Gamma (\ker
F_{\ast })$. Then using (\ref{3}) and (\ref{Nambla fi}) we have 
\begin{equation*}
g_{M}(\nabla _{Y}\mathcal{A}_{X}\xi ,\phi U)=-g_{M}(\mathcal{A}_{X}\xi ,\phi 
\mathcal{A}_{Y}U)-g_{M}(\mathcal{A}_{X}\xi ,\phi (\mathcal{V}\nabla
_{Y}U))+\eta (U)g_{M}(\mathcal{A}_{X}\xi ,Y).
\end{equation*}%
Since $\phi (\mathcal{V}\nabla _{Y}U)\in \Gamma (\phi \ker F_{\ast })=\Gamma
((\ker F_{\ast })^{\bot }),$ we obtain (\ref{C3}).

Using (\ref{4c}), we obtain directly (\ref{C4}).
\end{proof}

We now study the integrability of the distribution $(\ker F_{\ast })^{\bot }$
and then we investigate the geometry of leaves of $\ker F_{\ast }$ and $%
(\ker F_{\ast })^{\bot }.$ We note that it is known that the distribution $%
\ker F_{\ast }$ \ is integrable.

\begin{theorem}
Let F be an anti-invariant Riemannian submersion from a Sasakian manifold $%
M(\phi ,\xi ,\eta ,g_{M})$ to a Riemannian manifold $(N,g_{N})$. Then the
following assertions are equivalent to each other;
\end{theorem}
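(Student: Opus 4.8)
The plan is to reduce the integrability of $(\ker F_{\ast})^{\bot}$ to the vanishing of $\mathcal{V}[X,Y]$ for $X,Y\in\Gamma((\ker F_{\ast})^{\bot})$, and then to rewrite this bracket by means of the Sasakian structure so that it splits into a piece visible through the second fundamental form of $F$ and a piece visible through the O'Neill tensor $\mathcal{A}$. By part $ii)$ of Lemma 1 one has $\mathcal{A}_{X}Y=\tfrac{1}{2}\mathcal{V}[X,Y]$, so $(\ker F_{\ast})^{\bot}$ is integrable precisely when $g_{M}([X,Y],V)=0$ for every vertical $V$. Since $\phi(\ker F_{\ast})\subseteq(\ker F_{\ast})^{\bot}$ and $\phi$ is skew-symmetric with respect to $g_{M}$, the natural test vectors are those of the form $\phi V$, which is exactly what the conjectured right-hand sides $g_{N}(\,\cdot\,,F_{\ast}\phi V)$ encode after pushing forward by $F_{\ast}$; the direction $\xi$, where $\phi\xi=0$, must be handled as a separate case.

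First I would apply the identity (\ref{Namblafi2}), $\nabla _{X}Y=-\phi \nabla _{X}\phi Y+g_{M}(Y,\phi X)\xi$, to compute $g_{M}(\nabla_{X}Y,V)$, moving $\phi$ onto $V$ by skew-symmetry to produce a leading term $g_{M}(\nabla _{X}\phi Y,\phi V)$ together with an $\eta(V)g_{M}(Y,\phi X)$ correction coming from the $\xi$-summand. Antisymmetrising in $X$ and $Y$ then yields a formula for $g_{M}([X,Y],V)$. Next I would decompose $\phi Y=BY+CY$ as in (\ref{A2}) and substitute the O'Neill formulas (\ref{1})--(\ref{4}) into $\nabla _{X}\phi Y$: the vertical part $BY$ contributes the horizontal term $\mathcal{A}_{X}BY$ through (\ref{3}), while the $\mu$-part $CY$ is processed through (\ref{4}). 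Finally I would translate these expressions into statements about $F$: using $(\nabla F_{\ast})(X,BY)=-F_{\ast}(\nabla _{X}BY)=-F_{\ast}(\mathcal{A}_{X}BY)$ (since $F_{\ast}BY=0$), that $(\nabla F_{\ast})(X,CY)=0$ by (\ref{5a}), and that $F_{\ast}$ is an isometry on horizontal vectors, the pairing $g_{M}(\mathcal{A}_{X}BY,\phi V)$ becomes $-g_{N}((\nabla F_{\ast})(X,\phi Y),F_{\ast}\phi V)$; reading the same identities without passing to $N$ delivers the purely tensorial condition in terms of $\mathcal{A}$, $B$ and $C$. The chain of equivalences then follows by asserting these identities for all $X,Y\in\Gamma((\ker F_{\ast})^{\bot})$ and all vertical $V$, each implication being reversible.

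The main obstacle I anticipate is the bookkeeping of the extra terms forced by the Sasakian, rather than Kähler, structure. The summand $g_{M}(Y,\phi X)\xi$ in (\ref{Namblafi2}) together with the vertical position of $\xi$ inject $\eta$-dependent corrections, so the test against $V=\xi$ (where $F_{\ast}\phi V=0$) genuinely differs from the test against $V\bot\xi$ and must be isolated. Equally delicate is keeping the horizontal $\mu$-contribution $\mathcal{H}\nabla _{X}CY-\mathcal{H}\nabla _{Y}CX$ separate from the $\mathcal{A}_{X}BY-\mathcal{A}_{Y}BX$ term and verifying that it is accounted for consistently on both sides of the asserted equality. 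I expect the earlier relations (\ref{C1})--(\ref{C4}), which already package $\mathcal{A}_{X}\xi=-CX$ and its covariant derivative, to be the tool that resolves this interplay cleanly and to supply the identities needed to close the equivalences.
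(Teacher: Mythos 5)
Your proposal is correct and follows essentially the same route as the paper's proof: reduce integrability of $(\ker F_{\ast})^{\bot}$ to $g_{M}([X,Y],V)=0$ for vertical $V$, expand via (\ref{Namblafi2}) and the decomposition (\ref{A2}), handle the $CY=-\mathcal{A}_{Y}\xi$ terms and the $\eta(V)$-corrections through (\ref{C1})--(\ref{C4}), translate to $N$ using (\ref{5}), (\ref{5a}) and the fact that $F_{\ast}$ is an isometry on horizontal vectors, and obtain $(ii)\Leftrightarrow(iii)$ by pushing $\mathcal{A}_{X}BY-\mathcal{A}_{Y}BX$ forward by $F_{\ast}$. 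The only minor difference is that no separate case for $V=\xi$ is actually needed: in the paper's computation the $\eta(V)$ terms cancel identically by (\ref{C4}), so a single formula for $g_{M}([X,Y],V)$ covers all vertical $V$.
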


$\ i)$ $(\ker F_{\ast })^{\bot }$ \textit{is integrable. }

$ii)$ 
\begin{eqnarray*}
g_{N}((\nabla F_{\ast })(Y,BX),F_{\ast }\phi V) &=&g_{N}((\nabla F_{\ast
})(X,BY),F_{\ast }\phi V) \\
&&+g_{M}(\mathcal{A}_{X}\xi ,\phi \mathcal{A}_{Y}V)-g_{M}(\mathcal{A}_{Y}\xi
,\phi \mathcal{A}_{X}V).
\end{eqnarray*}

$iii)$%
\begin{equation*}
g_{M}(\mathcal{A}_{X}BY-\mathcal{A}_{Y}BX,\phi V)=g_{M}(\mathcal{A}_{X}\xi
,\phi \mathcal{A}_{Y}V)-g_{M}(\mathcal{A}_{Y}\xi ,\phi \mathcal{A}_{X}V)
\end{equation*}%
\textit{for }$X,Y\in \Gamma ((\ker F_{\ast })^{\bot })$\textit{\ and }$V\in
\Gamma (\ker F_{\ast }).$

\begin{proof}
Using (\ref{Namblafi2}), for $X,Y\in \Gamma ((\ker F_{\ast })^{\bot })$ and $%
V\in \Gamma (\ker F_{\ast })$ we get%
\begin{eqnarray*}
g_{M}(\left[ X,Y\right] ,V) &=&g_{M}(\nabla _{X}Y,V)-g_{M}(\nabla _{Y}X,V) \\
&=&g_{M}(\nabla _{X}\phi Y,\phi V)-g_{M}(\nabla _{Y}\phi X,\phi
V)+2g_{M}(\phi X,Y)g_{M}(V,\xi ).
\end{eqnarray*}%
Then from (\ref{A2}) we have%
\begin{eqnarray*}
g_{M}(\left[ X,Y\right] ,V) &=&g_{M}(\nabla _{X}BY,\phi V)-g_{M}(\nabla _{X}%
\mathcal{A}_{Y}\xi ,\phi V)-g_{M}(\nabla _{Y}BX,\phi V) \\
&&+g_{M}(\nabla _{Y}\mathcal{A}_{X}\xi ,\phi V)+2g_{M}(\phi X,Y)g_{M}(V,\xi
).
\end{eqnarray*}%
Using (\ref{AT2}), (\ref{3}) and if we take into account that $F$ is a
Riemannian submersion, we obtain%
\begin{eqnarray*}
g_{M}(\left[ X,Y\right] ,V) &=&g_{N}(F_{\ast }\nabla _{X}BY,F_{\ast }\phi
V)-g_{M}(\nabla _{X}\mathcal{A}_{Y}\xi ,\phi V) \\
&&-g_{N}(F_{\ast }\nabla _{Y}BX,F_{\ast }\phi V)+g_{M}(\nabla _{Y}\mathcal{A}%
_{X}\xi ,\phi V) \\
&&-2g_{M}(\mathcal{A}_{X}\xi ,Y)g_{M}(V,\xi ).
\end{eqnarray*}%
Thus, from (\ref{5}) and (\ref{C3}) we have 
\begin{eqnarray*}
g_{M}(\left[ X,Y\right] ,V) &=&g_{N}(-(\nabla F_{\ast })(X,BY)+(\nabla
F_{\ast })(Y,BX),F_{\ast }\phi V) \\
&&+g_{M}(\mathcal{A}_{Y}\xi ,\phi \mathcal{A}_{X}V)-g_{M}(\mathcal{A}_{X}\xi
,\phi \mathcal{A}_{Y}V)
\end{eqnarray*}%
which proves $(i)\Leftrightarrow (ii).$ On the other hand using (\ref{5}) we
get%
\begin{equation*}
(\nabla F_{\ast })(Y,BX)-(\nabla F_{\ast })(X,BY)=-F_{\ast }(\nabla
_{Y}BX-\nabla _{X}BY).
\end{equation*}%
Then (\ref{3}) implies that 
\begin{equation*}
(\nabla F_{\ast })(Y,BX)-(\nabla F_{\ast })(X,BY)=-F_{\ast }(\mathcal{A}%
_{Y}BX-\mathcal{A}_{X}BY).
\end{equation*}%
From (\ref{AT2}) $\mathcal{A}_{Y}BX-\mathcal{A}_{X}BY\in \Gamma ((\ker
F_{\ast })^{\bot }),$ this shows that $(ii)\Leftrightarrow (iii).$
\end{proof}

\begin{remark}
If $\phi (\ker F_{\ast })=(\ker F_{\ast })^{\bot }$ then we get $C=0$ and
morever (\ref{A2a}) implies that $TN=F_{\ast }(\phi (\ker F_{\ast }))$.
\end{remark}

Hence we have the following Corollary.

\begin{corollary}
Let $F:M(\phi ,\xi ,\eta ,g_{M})\rightarrow $ $(N,g_{N})$ be an
anti-invariant Riemannian submersion such that $\phi (\ker F_{\ast })=(\ker
F_{\ast })^{\bot },$ where $M(\phi ,\xi ,\eta ,g_{M})$ is a Sasakian
manifold and $(N,g_{N})$ is a Riemannian manifold. Then following assertions
are equivalent to each other;

$i)(\ker F_{\ast })^{\bot }$ is\ integrable\textit{.}

$ii)(\nabla F_{\ast })(Y,\phi X)=(\nabla F_{\ast })(X,\phi Y)$ for $X,Y\in
\Gamma ((\ker F_{\ast })^{\bot }).$

$iii)\mathcal{A}_{X}\phi Y=\mathcal{A}_{Y}\phi X.$
\end{corollary}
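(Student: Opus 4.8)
The plan is to obtain the Corollary directly by specializing Theorem 3 to the situation $\phi(\ker F_\ast)=(\ker F_\ast)^\bot$, in which the extra $\mathcal{A}$-terms collapse. First I would record the consequences of this hypothesis. By Theorem 1 the characteristic field $\xi$ is vertical, and by Remark 2 we have $C=0$, so that $\phi X=BX\in\Gamma(\ker F_\ast)$ for every $X\in\Gamma((\ker F_\ast)^\bot)$. Combining $C=0$ with (\ref{C1}) yields the key simplification
\[
\mathcal{A}_X\xi=-CX=0\qquad\text{for all }X\in\Gamma((\ker F_\ast)^\bot).
\]

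Next I would feed these facts into the statement of Theorem 3. Because $\mathcal{A}_X\xi=\mathcal{A}_Y\xi=0$, the two correction terms $g_M(\mathcal{A}_X\xi,\phi\mathcal{A}_Y V)$ and $g_M(\mathcal{A}_Y\xi,\phi\mathcal{A}_X V)$ both vanish, and (using $BX=\phi X$, $BY=\phi Y$) the assertions $(ii)$ and $(iii)$ of Theorem 3 reduce respectively to
\[
g_N((\nabla F_\ast)(Y,\phi X),F_\ast\phi V)=g_N((\nabla F_\ast)(X,\phi Y),F_\ast\phi V)
\]
and
\[
g_M(\mathcal{A}_X\phi Y-\mathcal{A}_Y\phi X,\phi V)=0,
\]
for all $X,Y\in\Gamma((\ker F_\ast)^\bot)$ and $V\in\Gamma(\ker F_\ast)$.

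The final step is to strip the test vector $\phi V$ from each identity. By Remark 2 we have $TN=F_\ast(\phi(\ker F_\ast))$, so as $V$ ranges over $\Gamma(\ker F_\ast)$ the images $F_\ast\phi V$ exhaust $TN$; hence the first displayed identity is equivalent to $(\nabla F_\ast)(Y,\phi X)=(\nabla F_\ast)(X,\phi Y)$, which is assertion $(ii)$ of the Corollary, giving $(i)\Leftrightarrow(ii)$. Likewise, since $\phi(\ker F_\ast)=(\ker F_\ast)^\bot$, the vectors $\phi V$ span $(\ker F_\ast)^\bot$, and $\mathcal{A}_X\phi Y-\mathcal{A}_Y\phi X$ already lies in $(\ker F_\ast)^\bot$ (exactly as noted in the proof of Theorem 3, via (\ref{AT2})); therefore the second displayed identity forces $\mathcal{A}_X\phi Y=\mathcal{A}_Y\phi X$, which is assertion $(iii)$, giving $(ii)\Leftrightarrow(iii)$.

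I expect the only delicate point to be the two spanning arguments: one must verify that testing against $F_\ast\phi V$ (respectively $\phi V$) genuinely captures all of $TN$ (respectively $(\ker F_\ast)^\bot$). Here the hypothesis $\phi(\ker F_\ast)=(\ker F_\ast)^\bot$ together with $\phi\xi=0$ is precisely what guarantees this, so no real obstacle remains beyond routine bookkeeping.
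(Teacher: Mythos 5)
Your proposal is correct and follows exactly the route the paper intends: the Corollary is stated right after Remark 2 precisely so that it follows from Theorem 3 by setting $C=0$ (hence $BX=\phi X$ and, via (\ref{C1}), $\mathcal{A}_X\xi=0$) and using $TN=F_{\ast}(\phi(\ker F_{\ast}))$ together with $\phi(\ker F_{\ast})=(\ker F_{\ast})^{\bot}$ to strip the test vectors $F_{\ast}\phi V$ and $\phi V$. Your spelling out of the two spanning arguments is sound and matches the paper's implicit reasoning.
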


\begin{theorem}
Let $F$ be an anti-invariant Riemannian submersion from a Sasakian manifold $%
M(\phi ,\xi ,\eta ,g_{M})$ to a Riemannian manifold $(N,g_{N}).$ Then the
following assertions are equivalent to each other;
\end{theorem}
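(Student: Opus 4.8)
Since the statement is the twin of the integrability theorem just proved and the paragraph preceding it announces that the leaves of $\ker F_{\ast }$ and $(\ker F_{\ast })^{\bot }$ are to be studied, I read the three equivalent assertions as characterizing when $(\ker F_{\ast })^{\bot }$ defines a totally geodesic foliation on $M$; note that $\ker F_{\ast }$ itself never can, since $\mathcal{T}_{U}\xi =-\phi U\neq 0$ already appeared in Theorem 2. The plan is therefore to start from the defining condition that the leaves of $(\ker F_{\ast })^{\bot }$ be totally geodesic, namely $g_{M}(\nabla _{X}Y,V)=0$ for all $X,Y\in \Gamma ((\ker F_{\ast })^{\bot })$ and $V\in \Gamma (\ker F_{\ast })$, and to rewrite this single scalar in terms of the second fundamental form $(\nabla F_{\ast })$ and the O'Neill tensor $\mathcal{A}$. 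The computation runs parallel to the integrability theorem, the only change being that the antisymmetric bracket $[X,Y]$ is replaced by the bare covariant derivative $\nabla _{X}Y$, so no antisymmetrization occurs.

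First I would apply the Sasakian identity (\ref{Namblafi2}) and then push $\phi $ across the metric by skew-symmetry, obtaining $g_{M}(\nabla _{X}Y,V)=g_{M}(\nabla _{X}\phi Y,\phi V)+g_{M}(Y,\phi X)\eta (V)$. Next I would insert the decomposition $\phi Y=BY+CY$ from (\ref{A2}) and treat the two pieces separately. For the vertical piece $BY$, since $F_{\ast }(BY)=0$ the definition (\ref{5}) gives $F_{\ast }(\nabla _{X}BY)=-(\nabla F_{\ast })(X,BY)$, and because $F$ is a Riemannian submersion and $\phi V$ is horizontal this converts the term into $g_{M}(\mathcal{A}_{X}BY,\phi V)=-g_{N}((\nabla F_{\ast })(X,BY),F_{\ast }\phi V)$, which will become assertion (ii). For the $\mu $-piece I would substitute $CY=-\mathcal{A}_{Y}\xi $ from (\ref{C1}) and invoke (\ref{C3}) to express $g_{M}(\nabla _{X}CY,\phi V)$ purely through $\mathcal{A}$ and $\eta $, producing the term $g_{M}(\mathcal{A}_{Y}\xi ,\phi \mathcal{A}_{X}V)$ together with an $\eta (V)$-correction.

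The step I expect to be the main obstacle is the exact cancellation of the purely Sasakian $\eta (V)$-terms, which is precisely where Şahin's Hermitian argument does not transcribe verbatim. The identity (\ref{Namblafi2}) contributes $+g_{M}(Y,\phi X)\eta (V)$, while (\ref{C3}) contributes $-\eta (V)g_{M}(\mathcal{A}_{Y}\xi ,X)$, and these must annihilate one another. I would settle this by combining (\ref{A2}) and (\ref{C1}): since $\mathcal{A}_{Y}\xi =-CY$ and $BY$ is vertical while $X$ is horizontal, one has $g_{M}(\mathcal{A}_{Y}\xi ,X)=-g_{M}(CY,X)=-g_{M}(\phi Y,X)=g_{M}(Y,\phi X)$, so the two $\eta (V)$-contributions are equal and opposite and drop out. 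What survives is exactly $g_{M}(\nabla _{X}Y,V)=-g_{N}((\nabla F_{\ast })(X,BY),F_{\ast }\phi V)+g_{M}(\mathcal{A}_{Y}\xi ,\phi \mathcal{A}_{X}V)$, and a useful consistency check is that antisymmetrizing in $X,Y$ reproduces the bracket formula of the integrability theorem term for term.

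Having obtained $(i)\Leftrightarrow (ii)$ from setting this scalar to zero, I would close with $(ii)\Leftrightarrow (iii)$ exactly as before: using (\ref{5}) and (\ref{3}) to trade $(\nabla F_{\ast })(X,BY)$ back for the pure tensor expression $g_{M}(\mathcal{A}_{X}BY,\phi V)$, legitimate because $\mathcal{A}_{X}BY\in \Gamma ((\ker F_{\ast })^{\bot })$. I anticipate no real difficulty in this final equivalence, as it is a formal restatement once the Riemannian-submersion dictionary between $F_{\ast }\nabla $ and $(\nabla F_{\ast })$ has been installed; the genuine content of the proof lies in the $\eta $-cancellation described above.
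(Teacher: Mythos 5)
You correctly identified the statement as the criterion for $(\ker F_{\ast })^{\bot }$ to define a totally geodesic foliation (the companion of the integrability theorem), and the substance of your proof is the paper's own argument: your master identity $g_{M}(\nabla _{X}Y,V)=g_{M}(\mathcal{A}_{X}BY,\phi V)+g_{M}(\mathcal{A}_{Y}\xi ,\phi \mathcal{A}_{X}V)$, obtained from (\ref{Namblafi2}), (\ref{A2}), (\ref{C1}) and (\ref{C3}), is exactly the paper's (\ref{C6}) after it applies (\ref{C4}); your inline cancellation $g_{M}(\mathcal{A}_{Y}\xi ,X)=g_{M}(Y,\phi X)$ and the paper's appeal to (\ref{C4}) are the same fact, both consequences of $\mathcal{A}_{Y}\xi =-CY$ and skewness of $\phi $, so your $(i)\Leftrightarrow (ii)$ matches the paper step for step and your tensor form is literally assertion (ii). The genuine difference is the $(ii)\Leftrightarrow (iii)$ leg. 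You use the unconditional identity $g_{M}(\mathcal{A}_{X}BY,\phi V)=-g_{N}((\nabla F_{\ast })(X,BY),F_{\ast }\phi V)$ (from (\ref{5}) and (\ref{3})) together with $(\nabla F_{\ast })(X,CY)=0$ from (\ref{5a}), so your version of (iii) reads $g_{N}((\nabla F_{\ast })(X,\phi Y),F_{\ast }\phi V)=g_{M}(\mathcal{A}_{Y}\xi ,\phi \mathcal{A}_{X}V)$ with no $\eta $-term, and it is genuinely equivalent to (i) and (ii). The paper instead runs (ii) back through (\ref{C3}) and (\ref{A2}) (its chain (\ref{B6})--(\ref{B7})), which drags along the summand $-g_{M}(X,\mathcal{A}_{Y}\xi )\eta (V)$, so its stated assertion (iii) carries that extra term; reconciling the two formulations requires an observation neither you nor the paper makes explicit but which your setup supplies at once: condition (i) already forces $C=0$ (test integrability against $V=\xi $ using $\nabla _{X}\xi =-\phi X$), hence $\mathcal{A}_{Y}\xi =0$ and the extra term dies, whereas the converse direction from the paper's (iii) back to (i) is exactly where that term causes trouble. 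In short: same decomposition and same key lemmas for the heart of the proof, but your conversion to the second-fundamental-form statement is tighter than the paper's and yields the more defensible form of (iii), at the cost of not reproducing the paper's assertion (iii) verbatim.
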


$\ i)$ $(\ker F_{\ast })^{\bot }$ \textit{defines a totally geodesic
foliation on }$M.$

$ii)$ 
\begin{equation*}
g_{M}(\mathcal{A}_{X}BY,\phi V)=-g_{M}(\mathcal{A}_{Y}\xi ,\phi \mathcal{A}%
_{X}V).
\end{equation*}

$\bigskip iii)$%
\begin{equation*}
g_{N}((\nabla F_{\ast })(X,\phi Y),F_{\ast }\phi V)=g_{M}(\mathcal{A}_{Y}\xi
,\phi \mathcal{A}_{X}V)-g_{M}(\mathcal{A}_{Y}\xi ,X)\eta (V)
\end{equation*}%
\textit{for }$X,Y\in \Gamma ((\ker F_{\ast })^{\bot })$\textit{\ and }$V\in
\Gamma (\ker F_{\ast }).$

\begin{proof}
From (\ref{3}), (\ref{A2}), (\ref{Namblafi2}) and (\ref{C3}) we obtain%
\begin{equation}
g_{M}(\nabla _{X}Y,V)=g_{M}(\mathcal{A}_{X}BY,\phi V)+g_{M}(\mathcal{A}%
_{Y}\xi ,\phi \mathcal{A}_{X}V)-(g_{M}(\mathcal{A}_{Y}\xi ,X)+g_{M}(\mathcal{%
A}_{X}\xi ,Y))\eta (V)  \label{C6}
\end{equation}%
\textit{for }$X,Y\in \Gamma ((\ker F_{\ast })^{\bot })$\textit{\ and }$V\in
\Gamma (\ker F_{\ast }).$ Using (\ref{C4}) in (\ref{C6}) we get%
\begin{equation*}
g_{M}(\nabla _{X}Y,V)=g_{M}(\mathcal{A}_{X}BY,\phi V)+g_{M}(\mathcal{A}%
_{Y}\xi ,\phi \mathcal{A}_{X}V)
\end{equation*}%
The last equation shows $(i)\Leftrightarrow (ii)$.

For $X,Y\in \Gamma ((\ker F_{\ast })^{\bot })$\textit{\ and }$V\in \Gamma
(\ker F_{\ast }),$%
\begin{eqnarray}
g_{M}(\mathcal{A}_{X}BY,\phi V) &=&-g_{M}(\mathcal{A}_{Y}\xi ,\phi \mathcal{A%
}_{X}V)  \notag \\
&&\overset{(\ref{C3})}{=}g_{M}(\nabla _{X}\mathcal{A}_{Y}\xi ,\phi
V)-g_{M}(X,\mathcal{A}_{Y}\xi )\eta (V)  \notag \\
&&\overset{(\ref{A2})}{=}-g_{M}(\nabla _{X}\phi Y,\phi V)+g_{M}(\nabla
_{X}BY,\phi V)-g_{M}(X,\mathcal{A}_{Y}\xi )\eta (V)  \label{B6}
\end{eqnarray}%
Since differential $F_{\ast }$ preserves the lenghts of horizontal vectors
the relation (\ref{B6}) forms%
\begin{equation}
g_{M}(\mathcal{A}_{X}BY,\phi V)=g_{N}(F_{\ast }\nabla _{X}BY,F_{\ast }\phi
V)-g_{M}(\nabla _{X}\phi Y,\phi V)-g_{M}(X,\mathcal{A}_{Y}\xi )\eta (V)
\label{B7}
\end{equation}%
Using (\ref{Namblafi2}), (\ref{metric}), (\ref{5}) and (\ref{5a}) in (\ref%
{B7}) respectively, we obtain%
\begin{equation*}
g_{M}(\mathcal{A}_{X}BY,\phi V)=g_{N}(-(\nabla F_{\ast })(X,\phi Y),F_{\ast
}\phi V)-g_{M}(X,\mathcal{A}_{Y}\xi )\eta (V)
\end{equation*}%
which tells that $(ii)\Leftrightarrow (iii).$
\end{proof}

\begin{corollary}
Let $F:M(\phi ,\xi ,\eta ,g_{M})\rightarrow $ $(N,g_{N})$ be an
anti-invariant Riemannian submersion such that $\phi (\ker F_{\ast })=(\ker
F_{\ast })^{\bot }.$ Then the following assertions are equivalent to each
other;

$\ i)$ $(\ker F_{\ast })^{\bot }$ \textit{defines a totally geodesic
foliation on }$M.$

$ii)$ $\mathcal{A}_{X}\phi Y=0.$

$iii)$ $(\nabla F_{\ast })(X,\phi Y)=0$ \textit{for }$X,Y\in \Gamma ((\ker
F_{\ast })^{\bot })$\textit{\ and }$V\in \Gamma (\ker F_{\ast })$.
\end{corollary}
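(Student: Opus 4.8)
The plan is to derive this Corollary directly from Theorem 4 by specializing its three equivalent conditions to the case $\phi(\ker F_{\ast}) = (\ker F_{\ast})^{\bot}$. The first thing I would record is that this hypothesis collapses the endomorphism $C$: by Remark 2 we have $C=0$, so the decomposition $\phi Y = BY + CY$ of (\ref{A2}) reduces to $\phi Y = BY$ for every $Y \in \Gamma((\ker F_{\ast})^{\bot})$. Feeding $C=0$ into (\ref{C1}), namely $CX = -\mathcal{A}_{X}\xi$, immediately yields $\mathcal{A}_{X}\xi = 0$ for all $X \in \Gamma((\ker F_{\ast})^{\bot})$. These two facts are precisely what make the extra terms in Theorem 4 disappear.

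Next I would substitute $BY = \phi Y$ and $\mathcal{A}_{Y}\xi = 0$ into condition (ii) of Theorem 4. Its right-hand side $-g_{M}(\mathcal{A}_{Y}\xi, \phi \mathcal{A}_{X}V)$ vanishes, so (ii) becomes the statement $g_{M}(\mathcal{A}_{X}\phi Y, \phi V) = 0$ for all $V \in \Gamma(\ker F_{\ast})$. To upgrade this to $\mathcal{A}_{X}\phi Y = 0$ I would note two things: first, since $\phi Y \in \Gamma(\ker F_{\ast})$ is vertical and $\mathcal{A}_{X}$ reverses the horizontal and vertical distributions, $\mathcal{A}_{X}\phi Y$ is horizontal, i.e. lies in $(\ker F_{\ast})^{\bot}$; second, as $V$ ranges over $\Gamma(\ker F_{\ast})$ the vectors $\phi V$ sweep out all of $\phi(\ker F_{\ast}) = (\ker F_{\ast})^{\bot}$. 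Hence the vanishing of $g_{M}(\mathcal{A}_{X}\phi Y, \phi V)$ against every such $\phi V$, together with $\mathcal{A}_{X}\phi Y \in (\ker F_{\ast})^{\bot}$, forces $\mathcal{A}_{X}\phi Y = 0$ by nondegeneracy of $g_{M}$. This establishes $(i)\Leftrightarrow(ii)$.

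For the equivalence with $(iii)$ I would run the same specialization on condition (iii) of Theorem 4. With $\mathcal{A}_{Y}\xi = 0$ both terms on its right-hand side vanish, leaving $g_{N}((\nabla F_{\ast})(X,\phi Y), F_{\ast}\phi V) = 0$ for all $V \in \Gamma(\ker F_{\ast})$. The analogous spanning argument now uses Remark 2 in its second form, $TN = F_{\ast}(\phi(\ker F_{\ast}))$: as $V$ ranges over the fibre distribution, $F_{\ast}\phi V$ ranges over all of $TN$, so a section of the pullback bundle that is $g_{N}$-orthogonal to every $F_{\ast}\phi V$ must vanish. Thus $(\nabla F_{\ast})(X,\phi Y) = 0$, which is condition $(iii)$ of the Corollary.

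The computations here are essentially mechanical once Theorem 4 and Remark 2 are in hand; the only step that requires genuine care is the passage from the pointwise orthogonality relations to the vanishing of the vectors themselves. The key, in each case, is to check both that the relevant object ($\mathcal{A}_{X}\phi Y$, respectively $(\nabla F_{\ast})(X,\phi Y)$) lies in the space being tested against and that $\phi V$ (respectively $F_{\ast}\phi V$) genuinely spans that space. The latter is exactly the content of the equality $\phi(\ker F_{\ast}) = (\ker F_{\ast})^{\bot}$ and of its image under $F_{\ast}$ recorded in Remark 2, so no new geometric input beyond the preceding results is needed.
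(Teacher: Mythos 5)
Your proposal is correct and is essentially the paper's own (implicit) argument: the paper states this corollary without proof, as an immediate specialization of the preceding theorem on totally geodesic foliations via Remark 2, which gives $C=0$, hence $BY=\phi Y$, $\mathcal{A}_{Y}\xi =-CY=0$, and $TN=F_{\ast }(\phi (\ker F_{\ast }))$. The only details you add --- that $\mathcal{A}_{X}\phi Y$ is horizontal while $\phi V$ spans $(\ker F_{\ast })^{\bot }$, and that $F_{\ast }\phi V$ spans $TN$, so the orthogonality relations force the vectors themselves to vanish --- are exactly the routine verifications the paper leaves to the reader.
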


We note that a differentiable map $F$ between two Riemannian manifolds is
called totally geodesic if $\nabla F_{\ast }=0.$ Using Theorem 2 one can
easily prove that the fibers are not totally geodesic. Hence we have the
following Theorem.

\begin{theorem}
Let $F:M(\phi ,\xi ,\eta ,g_{M})\rightarrow $ $(N,g_{N})$ be an
anti-invariant Riemannian submersion where $M(\phi ,\xi ,\eta ,g_{M})$ is a
Sasakian manifold and $(N,g_{N})$ is a Riemannian manifold. Then $F$ is not
totally geodesic map.
\end{theorem}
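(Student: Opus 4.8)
The plan is to argue by contradiction, using the standard principle that a totally geodesic map is in particular a Riemannian submersion with totally geodesic fibers, and then contradicting this with the identity $\mathcal{T}_{U}\xi =-\phi U$ already recorded in (\ref{T1}) in the proof of Theorem 2. Recall that $F$ is totally geodesic means $\nabla F_{\ast }=0$, i.e. $(\nabla F_{\ast })(E,G)=0$ for all $E,G\in \Gamma (TM)$, so it suffices to exhibit a single pair of vector fields on which the second fundamental form of $F$ fails to vanish.

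First I would restrict $\nabla F_{\ast }$ to the vertical distribution. For $U,V\in \Gamma (\ker F_{\ast })$, formula (\ref{5}) gives $(\nabla F_{\ast })(U,V)=\nabla _{U}^{F}F_{\ast }V-F_{\ast }(\nabla _{U}V)$. Since $V$ is vertical, $F_{\ast }V=0$, so the first term drops out; and decomposing $\nabla _{U}V=\hat{\nabla}_{U}V+\mathcal{T}_{U}V$ via (\ref{1}), the vertical part $\hat{\nabla}_{U}V$ is annihilated by $F_{\ast }$. Hence $(\nabla F_{\ast })(U,V)=-F_{\ast }(\mathcal{T}_{U}V)$. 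Because $\mathcal{T}_{U}V$ is horizontal and $F_{\ast }$ preserves the lengths of horizontal vectors by axiom $S2$ (so is injective on them), the vanishing of $(\nabla F_{\ast })(U,V)$ for all vertical $U,V$ is equivalent to $\mathcal{T}\equiv 0$, i.e. to the fibers being totally geodesic.

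It then remains to contradict $\mathcal{T}\equiv 0$. Taking $V=\xi $ (which is vertical in this setting) and using (\ref{T1}), namely $\mathcal{T}_{U}\xi =-\phi U$, I obtain $(\nabla F_{\ast })(U,\xi )=-F_{\ast }(\mathcal{T}_{U}\xi )=F_{\ast }(\phi U)$. For any vertical $U$ not proportional to $\xi $ we have $\phi U\neq 0$ by (\ref{phi^2}), and $\phi U\in \phi (\ker F_{\ast })\subseteq (\ker F_{\ast })^{\bot }$ is horizontal; since $F_{\ast }$ is injective on horizontal vectors, $F_{\ast }(\phi U)\neq 0$. Thus $(\nabla F_{\ast })(U,\xi )\neq 0$, so $\nabla F_{\ast }\neq 0$ and $F$ is not totally geodesic.

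The computation itself is routine; the only points requiring care are the two structural facts that make the contradiction genuine, namely that $F_{\ast }$ kills vertical vectors while remaining injective on horizontal ones (so that $F_{\ast }(\mathcal{T}_{U}V)=0$ forces $\mathcal{T}_{U}V=0$), and that a vertical vector field with $\phi U\neq 0$ actually exists. The latter holds as soon as $\ker F_{\ast }$ is strictly larger than $\mathbb{R}\xi $, which is the mild nondegeneracy already implicit in the proof of Theorem 2; I would note that one may bypass $\mathcal{T}$ entirely and compute $(\nabla F_{\ast })(U,\xi )=-F_{\ast }(\nabla _{U}\xi )=F_{\ast }(\phi U)$ directly from (\ref{nablaXxi}), which gives the same conclusion in one line.
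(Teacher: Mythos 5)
Your proposal is correct and takes essentially the same route as the paper: the paper's own (very terse) argument is exactly that a totally geodesic map would force the fibers to be totally geodesic, which is contradicted via the identity $\mathcal{T}_{U}\xi =-\phi U$ from the proof of Theorem 2. Your write-up simply supplies the details the paper leaves as ``one can easily prove'' --- namely $(\nabla F_{\ast })(U,V)=-F_{\ast }(\mathcal{T}_{U}V)$ for vertical $U,V$ and the injectivity of $F_{\ast }$ on horizontal vectors --- together with an explicit flag of the implicit assumption, shared by the paper, that $\ker F_{\ast }$ is strictly larger than $\mathbb{R}\xi $.
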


Finally, we give a necessary and sufficient condition for an anti-invariant
Riemannian submersion such that $\phi (\ker F_{\ast })=(\ker F_{\ast
})^{\bot }$ to be harmonic.

\begin{theorem}
Let $F:M(\phi ,\xi ,\eta ,g_{M})\rightarrow $ $(N,g_{N})$ be an
anti-invariant Riemannian submersion such that $\phi (\ker F_{\ast })=(\ker
F_{\ast })^{\bot },$ where $M(\phi ,\xi ,\eta ,g_{M})$ is a Sasakian
manifold and $(N,g_{N})$ is a Riemannian manifold. Then $F$ is harmonic if
and only if Trace$\phi \mathcal{T}_{V}=(2m-n)\eta (V)$ for $V\in \Gamma
(\ker F_{\ast })$.
\end{theorem}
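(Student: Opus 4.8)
The plan is to compute the tension field of $F$ directly, which will reduce harmonicity to minimality of the fibers, and then to rewrite the minimality condition using the Sasakian identity (\ref{Nambla fi}). Since the hypothesis is $\phi (\ker F_{\ast })=(\ker F_{\ast })^{\bot }$, Theorem 1 applies: $\xi $ is vertical and $m=n$, so the fibers have dimension $2m-n+1$. I would fix an orthonormal frame $\{e_{1},\dots ,e_{2m-n},\xi \}$ of $\ker F_{\ast }$; because $\eta (e_{i})=0$ and $\phi (\ker F_{\ast })=(\ker F_{\ast })^{\bot }$, the relation (\ref{metric}) shows that $\{\phi e_{1},\dots ,\phi e_{2m-n}\}$ is an orthonormal frame of $(\ker F_{\ast })^{\bot }$. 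To compute the tension field I would use (\ref{6}): the horizontal terms drop out by (\ref{5a}), while by (\ref{1}) each vertical term gives $(\nabla F_{\ast })(e_{i},e_{i})=-F_{\ast }(\mathcal{T}_{e_{i}}e_{i})$, and the $\xi $-term vanishes since $\mathcal{T}_{\xi }\xi =0$. Hence $\tau (F)=-F_{\ast }(\sum_{i}\mathcal{T}_{e_{i}}e_{i})$, and as $F_{\ast }$ is injective on the horizontal distribution, $F$ is harmonic if and only if $\sum_{i}\mathcal{T}_{e_{i}}e_{i}=0$.

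The crux is a pointwise identity converting $\mathcal{T}_{e_{i}}e_{i}$ into the quantity appearing in the trace. Starting from $\mathcal{T}_{e_{i}}\phi e_{i}=\mathcal{V}\nabla _{e_{i}}\phi e_{i}$ (by (\ref{2})) and expanding $\nabla _{e_{i}}\phi e_{i}=(\nabla _{e_{i}}\phi )e_{i}+\phi \nabla _{e_{i}}e_{i}$, the Sasakian relation (\ref{Nambla fi}) gives $(\nabla _{e_{i}}\phi )e_{i}=\xi $. Splitting $\nabla _{e_{i}}e_{i}$ by (\ref{1}) into the horizontal part $\mathcal{T}_{e_{i}}e_{i}$ and the vertical part $\hat{\nabla}_{e_{i}}e_{i}$, and using that $\phi $ maps $(\ker F_{\ast })^{\bot }$ into $\ker F_{\ast }$ and $\ker F_{\ast }$ into $(\ker F_{\ast })^{\bot }$ (here $\mu =0$), the vertical projection keeps only $\phi \mathcal{T}_{e_{i}}e_{i}$. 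This yields $\mathcal{T}_{e_{i}}\phi e_{i}=\xi +\phi \mathcal{T}_{e_{i}}e_{i}$, and summing over $i$ gives $\sum_{i}\mathcal{T}_{e_{i}}\phi e_{i}=(2m-n)\xi +\phi \sum_{i}\mathcal{T}_{e_{i}}e_{i}$; the constant $(2m-n)$ is exactly the number of frame vectors $e_{i}$, which is the source of the factor in the theorem.

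Next I would identify the trace. Computing $\text{Trace}\,\phi \mathcal{T}_{V}$ over the fiber frame $\{e_{i},\xi \}$, the $\xi $-term vanishes, and using the skew-symmetry of $\phi $ (a consequence of (\ref{metric})), the symmetry $\mathcal{T}_{V}e_{i}=\mathcal{T}_{e_{i}}V$ from (\ref{TUW}) and the skew-symmetry (\ref{4b}) of $\mathcal{T}_{e_{i}}$, one obtains $\text{Trace}\,\phi \mathcal{T}_{V}=g_{M}(V,\sum_{i}\mathcal{T}_{e_{i}}\phi e_{i})$. Substituting the identity from the previous paragraph gives $\text{Trace}\,\phi \mathcal{T}_{V}=(2m-n)\eta (V)+g_{M}(V,\phi \sum_{i}\mathcal{T}_{e_{i}}e_{i})$. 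Since $\phi $ is injective on $(\ker F_{\ast })^{\bot }$, the vertical field $\phi \sum_{i}\mathcal{T}_{e_{i}}e_{i}$ vanishes if and only if $\sum_{i}\mathcal{T}_{e_{i}}e_{i}=0$; testing against all $V\in \Gamma (\ker F_{\ast })$ then shows that $\text{Trace}\,\phi \mathcal{T}_{V}=(2m-n)\eta (V)$ holds identically exactly when the fibers are minimal, that is, when $F$ is harmonic.

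The step I expect to be most delicate is the pointwise identity in the second paragraph: extracting the correct $(2m-n)\xi $ contribution requires carefully tracking which summands of $\phi \nabla _{e_{i}}e_{i}$ survive the vertical projection, which in turn depends on $\phi $ interchanging $\ker F_{\ast }$ and $(\ker F_{\ast })^{\bot }$ -- valid here only because $\mu =0$ under the hypothesis $\phi (\ker F_{\ast })=(\ker F_{\ast })^{\bot }$. A secondary point to pin down is the convention for $\text{Trace}\,\phi \mathcal{T}_{V}$: it must be the trace over an orthonormal frame of the fibers $\ker F_{\ast }$ (not over all of $TM$), since otherwise each contribution is counted twice and the constant would come out as $2(2m-n)$ rather than $(2m-n)$.
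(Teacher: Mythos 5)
Your proof is correct and takes essentially the same route as the paper: both reduce harmonicity to minimality of the fibers and rest on the identity $\mathcal{T}_{V}\phi W=\phi \mathcal{T}_{V}W-\eta (W)V+g_{M}(V,W)\xi $ (you derive its diagonal case $V=W=e_{i}$), followed by the identical trace manipulations using (\ref{TUW}), (\ref{4b}) and the skew-symmetry of $\phi $. The only cosmetic differences are that you compute the tension field directly where the paper cites Eells--Sampson, and you spell out more explicitly the converse direction and the convention that the trace is taken over a frame of the fiber.
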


\begin{proof}
From \cite{EJ} we know that $F$ is harmonic if and only if $F$ has minimal
fibres. Thus $F$ is harmonic if and only if $\dsum\limits_{i=1}^{k}\mathcal{T%
}_{e_{i}}e_{i}=0,$ where $k=2m+1-n$ is dimension of $\ker F_{\ast }$ $.$ On
the other hand, from (\ref{1}), (\ref{2}) and (\ref{Nambla fi}) we get%
\begin{equation}
\mathcal{T}_{V}\phi W=\phi \mathcal{T}_{V}W-\eta (W)V+g(V,W)\xi  \label{S15}
\end{equation}%
for any $W,$ $V\in \Gamma (\ker F_{\ast }).$ Using (\ref{S15}), we get 
\begin{equation*}
\dsum\limits_{i=1}^{k}g_{M}(\mathcal{T}_{e_{i}}\phi
e_{i},V)=-\dsum\limits_{i=1}^{k}g_{M}(\mathcal{T}_{e_{i}}e_{i},\phi
V)+(k-1)\eta (V)
\end{equation*}%
for any $V\in \Gamma (\ker F_{\ast }).$ (\ref{4b}) implies that%
\begin{equation*}
\dsum\limits_{i=1}^{k}g_{M}(\phi e_{i},\mathcal{T}_{e_{i}}V)=\dsum%
\limits_{i=1}^{k}g_{M}(\mathcal{T}_{e_{i}}e_{i},\phi V)-(k-1)\eta (V)
\end{equation*}%
Then, using (\ref{TUW}) we have%
\begin{equation*}
\dsum\limits_{i=1}^{k}g_{M}(\phi e_{i},\mathcal{T}_{V\text{ }%
}e_{i})=\dsum\limits_{i=1}^{k}g_{M}(\mathcal{T}_{e_{i}}e_{i},\phi
V)-(k-1)\eta (V).
\end{equation*}%
Hence, proof comes from (\ref{metric}).
\end{proof}

\subsection{\textbf{Anti-invariant submersions admitting horizontal
structure vector field }}

In this section, we will study anti-invariant submersions from a Sasakian
manifold onto a Riemannian manifold such that the characteristic vector
field $\xi $ is horizontal. Using (\ref{A1}), we have $\mu =\phi \mu \oplus
\{\xi \}.$ For any horizontal vector field $X$ we put 
\begin{equation}
\phi X=BX+CX,  \label{IREM}
\end{equation}%
where where $BX\in \Gamma (\ker F_{\ast })$ and $CX\in \Gamma (\mu ).$

Now we suppose that $V$ is vertical and $X$ is horizontal vector field.
Using above relation and (\ref{metric}) we obtain%
\begin{equation*}
g_{M}(\phi V,CX)=0.
\end{equation*}%
From this last relation we have $g_{N}(F_{\ast }\phi V,F_{\ast }CX)=0$ which
implies that 
\begin{equation}
TN=F_{\ast }(\phi (\ker F_{\ast }))\oplus F_{\ast }(\mu ).  \label{Ba}
\end{equation}

\begin{theorem}
Let $M(\phi ,\xi ,\eta ,g_{M})$ be a Sasakian manifold \ of dimension $2m+1$
and $(N,g_{N})$ is a Riemannian manifold of dimension $n.$ Let $F:M(\phi
,\xi ,\eta ,g_{M})\rightarrow $ $(N,g_{N})$ be an anti-invariant Riemannian
submersion such that $(\ker F_{\ast })^{\bot }=\phi \ker F_{\ast }\oplus
\{\xi \}.$Then $m+1=n$.
\end{theorem}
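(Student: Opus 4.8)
The plan is to run the same kind of dimension count used in the proof of Theorem 1, adjusted for the fact that $\xi$ now sits in the horizontal space rather than in the fibre. First I would fix an orthonormal frame $U_{1},\dots ,U_{k}$ of $\Gamma (\ker F_{\ast })$, where $k=\dim \ker F_{\ast }=(2m+1)-n$; the last equality holds because $F$ is a Riemannian submersion, so $F_{\ast }$ restricts to an isometry of $(\ker F_{\ast })^{\bot }$ onto $TN$ and hence $\dim (\ker F_{\ast })^{\bot }=n$. The key structural point, in contrast with Theorem 1, is that the horizontality of $\xi $ forces $\eta (U_{i})=g_{M}(U_{i},\xi )=0$ for every $i$.

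Next I would verify that $\phi $ carries this frame to an orthonormal system spanning $\phi \ker F_{\ast }$. By anti-invariance $\phi U_{i}\in \Gamma ((\ker F_{\ast })^{\bot })$, and (\ref{metric}) together with $\eta (U_{i})=0$ gives $g_{M}(\phi U_{i},\phi U_{j})=g_{M}(U_{i},U_{j})-\eta (U_{i})\eta (U_{j})=\delta _{ij}$, so $\phi U_{1},\dots ,\phi U_{k}$ are orthonormal and $\dim \phi \ker F_{\ast }=k$. Moreover $g_{M}(\phi U_{i},\xi )=\eta (\phi U_{i})=0$ by (\ref{phi^2}), so $\xi $ is a unit vector orthogonal to $\phi \ker F_{\ast }$; combined with the hypothesis $(\ker F_{\ast })^{\bot }=\phi \ker F_{\ast }\oplus \{\xi \}$ this shows that $\{\phi U_{1},\dots ,\phi U_{k},\xi \}$ is an orthonormal frame of $(\ker F_{\ast })^{\bot }$, whence $\dim (\ker F_{\ast })^{\bot }=k+1$.

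Finally I would read off the two expressions for $\dim (\ker F_{\ast })^{\bot }$: the frame count gives $k+1$, while the submersion property gives $n$, so $k+1=n$. Substituting $k=(2m+1)-n$ yields $(2m+1)-n+1=n$, i.e. $2m+2=2n$, so $m+1=n$. I expect no real obstacle here, since the whole argument is bookkeeping of dimensions; the only point demanding care is that $\phi $ is injective on $\ker F_{\ast }$ (so that $\dim \phi \ker F_{\ast }=k$ rather than something smaller), and this is precisely what the vanishing of $\eta $ on the fibre---itself a consequence of $\xi $ being horizontal---guarantees through $\phi ^{2}=-I$ there.
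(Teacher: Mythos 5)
Your proposal is correct and follows essentially the same route as the paper: take an orthonormal frame $U_{1},\dots ,U_{k}$ of $\ker F_{\ast }$, observe that $\phi U_{1},\dots ,\phi U_{k},\xi $ is then an orthonormal frame of $(\ker F_{\ast })^{\bot }$, and equate $k+1$ with $n=\dim (\ker F_{\ast })^{\bot }$ to get $m+1=n$. The extra details you supply (that $\eta $ vanishes on the fibre because $\xi $ is horizontal, hence $\phi $ is injective and isometric on $\ker F_{\ast }$) are left implicit in the paper's proof, but the argument is the same dimension count.
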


\begin{proof}
We assume that $U_{1},...,U_{k\text{ }}$be an orthonormal frame of $\Gamma
(\ker F_{\ast })$, where $k=2m-n+1$. Since $(\ker F_{\ast })^{\bot }=\phi
\ker F_{\ast }\oplus \{\xi \}$, $\phi U_{1},...,\phi U_{k},\xi $ form an
orthonormal frame of $\Gamma ((\ker F_{\ast })^{\bot })$. So, by help of (%
\ref{A2a}) $\ $we obtain $k=n-1$ which implies that $m+1=n$.
\end{proof}

\begin{remark}
We note that Example 3 satisfies Theorem 7.
\end{remark}

From (\ref{phi^2}) and (\ref{Ba}) we obtain following Lemma.

\begin{lemma}
Let $F$ be an anti-invariant Riemannian submersion from a Sasakian manifold $%
M(\phi ,\xi ,\eta ,g_{M})$ to a Riemannian manifold $(N,g_{N})$. Then we have%
\begin{eqnarray*}
BCX &=&0, \\
\phi ^{2}X &=&C^{2}X+\phi BX
\end{eqnarray*}%
for $X\in \Gamma ((\ker F_{\ast })^{\bot }).$
\end{lemma}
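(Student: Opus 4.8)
The plan is to derive both identities directly from the splitting (\ref{IREM}) by applying $\phi$ a second time and then sorting the resulting terms into their vertical and horizontal components with the help of (\ref{phi^2}). Concretely, for $X\in\Gamma((\ker F_{\ast })^{\bot })$ I would start from $\phi X=BX+CX$ and apply $\phi$, obtaining $\phi ^{2}X=\phi BX+\phi CX$. Since $CX\in\Gamma(\mu)\subseteq\Gamma((\ker F_{\ast })^{\bot })$ is again horizontal, I can feed it back into the decomposition (\ref{IREM}) to write $\phi CX=BCX+C^{2}X$, where $BCX=B(CX)\in\Gamma(\ker F_{\ast })$ and $C^{2}X=C(CX)\in\Gamma(\mu)$ hold by the very definition of $B$ and $C$. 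Substituting yields the single master equation $\phi ^{2}X=\phi BX+BCX+C^{2}X$.

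The key step is then a type count against the orthogonal splitting $(\ker F_{\ast })^{\bot }=\phi\ker F_{\ast }\oplus\mu$ from (\ref{A1}). On the right-hand side, $\phi BX\in\phi(\ker F_{\ast })$ and $C^{2}X\in\mu$ are both horizontal, while $BCX\in\ker F_{\ast }$ is the unique vertical summand. On the left-hand side, (\ref{phi^2}) gives $\phi ^{2}X=-X+\eta(X)\xi$; because in this subsection $\xi$ is horizontal (indeed $\xi\in\mu\subseteq(\ker F_{\ast })^{\bot }$ via $\mu=\phi\mu\oplus\{\xi\}$), both $-X$ and $\eta(X)\xi$ are horizontal, so $\phi ^{2}X$ has no vertical component at all. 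Comparing the vertical parts of the two sides therefore forces $BCX=0$, which is the first claimed identity.

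With $BCX=0$ in hand, the horizontal part of the same master equation reads $\phi ^{2}X=\phi BX+C^{2}X=C^{2}X+\phi BX$, which is exactly the second identity. I would also record the reason the statement keeps $\phi ^{2}X$ rather than replacing it by $-X$: unlike the vertical-$\xi$ case of Lemma 3, here $\eta(X)$ need not vanish for horizontal $X$ (since $\xi$ itself is horizontal), so $\phi ^{2}X=-X+\eta(X)\xi$ does not simplify to $-X$.

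I do not expect a genuine obstacle; the only point requiring care is the bookkeeping of which summands are vertical and which are horizontal, together with the observation that $C^{2}X\in\Gamma(\mu)$ contributes nothing to the vertical comparison. Both identities then fall out of a single application of $\phi$ to (\ref{IREM}) followed by the projection onto $\ker F_{\ast }$ and onto $(\ker F_{\ast })^{\bot }$.
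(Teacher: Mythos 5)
Your proposal is correct and is exactly the argument the paper intends: the paper gives no written proof beyond the remark that the lemma follows from (\ref{phi^2}) and the decomposition $\phi X=BX+CX$, and your computation (apply $\phi$ to (\ref{IREM}), expand $\phi CX=BCX+C^{2}X$, then compare vertical and horizontal parts, noting that $\phi^{2}X=-X+\eta(X)\xi$ is purely horizontal because $\xi\in\mu$) is precisely the natural filling-in of that one-line justification. Your closing observation, that one cannot replace $\phi^{2}X$ by $-X$ here since $\eta(X)$ need not vanish when $\xi$ is horizontal, correctly explains the difference between this lemma and its vertical-$\xi$ counterpart.
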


Using (\ref{Nambla fi})\ one can easily obtain 
\begin{equation}
\nabla _{X}Y=-\phi \nabla _{X}\phi Y+\eta (\nabla _{X}Y)\xi -\eta (Y)\phi X
\label{Namblafi3}
\end{equation}%
for $X,Y\in \Gamma ((\ker F_{\ast })^{\bot }).$

\begin{lemma}
Let $F$ be an anti-invariant Riemannian submersion from a Sasakian manifold $%
M(\phi ,\xi ,\eta ,g_{M})$ to a Riemannian manifold $(N,g_{N})$. Then we have%
\begin{equation}
BX=-\mathcal{A}_{X}\xi ,  \label{IKE1}
\end{equation}%
\begin{equation}
\mathcal{T}_{U}\xi =0,  \label{IKE2}
\end{equation}%
\begin{equation}
g_{M}(\mathcal{A}_{X}\xi ,\phi U)=0,  \label{IKE3}
\end{equation}%
\begin{equation}
g_{M}(\nabla _{Y}\mathcal{A}_{X}\xi ,\phi U)=-g_{M}(\mathcal{A}_{X}\xi ,\phi 
\mathcal{A}_{Y}U),  \label{IKE4}
\end{equation}%
\begin{equation}
g_{M}(\nabla _{X}CY,\phi U)=-g_{M}(CY,\phi \mathcal{A}_{X}U),  \label{IKE6}
\end{equation}%
for $X,Y\in \Gamma ((\ker F_{\ast })^{\bot })$ and $U\in \Gamma (\ker
F_{\ast }).$
\end{lemma}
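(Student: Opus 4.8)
The plan is to read off all five identities from the two structural facts of the horizontal-$\xi$ setting: the Sasakian identity $\nabla_E\xi=-\phi E$ of (\ref{nablaXxi}) together with the O'Neill decompositions (\ref{1})--(\ref{4}), exploiting throughout that $\eta(U)=g_M(U,\xi)=0$ for every vertical $U$ (because $\xi$ is now horizontal) and that $\phi(\ker F_{\ast})\subseteq(\ker F_{\ast})^{\bot}$ with $\mu\perp\phi(\ker F_{\ast})$. First I would establish (\ref{IKE1}) and (\ref{IKE2}) by projecting onto the vertical distribution. For horizontal $X$, the vector $\xi$ is also horizontal, so (\ref{4}) gives $\nabla_X\xi=\mathcal{H}\nabla_X\xi+\mathcal{A}_X\xi$ with $\mathcal{A}_X\xi$ vertical; comparing this with $\nabla_X\xi=-\phi X=-BX-CX$ from (\ref{IREM}) and projecting to $\ker F_{\ast}$ — where the term $CX\in\mu$ is horizontal and drops out — yields $\mathcal{A}_X\xi=-BX$, i.e. (\ref{IKE1}). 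For vertical $U$, equation (\ref{2}) gives $\mathcal{T}_U\xi=\mathcal{V}\nabla_U\xi=\mathcal{V}(-\phi U)$, and since anti-invariance makes $\phi U$ horizontal this vertical part vanishes, giving (\ref{IKE2}). Then (\ref{IKE3}) is immediate, since by (\ref{IKE1}) the vector $\mathcal{A}_X\xi=-BX$ is vertical while $\phi U$ is horizontal (one may alternatively invoke the skew-symmetry (\ref{4c})).

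The substantive steps are (\ref{IKE4}) and (\ref{IKE6}), which I would obtain by differentiating the two orthogonality relations just established. Differentiating $g_M(\mathcal{A}_X\xi,\phi U)=0$ along a horizontal $Y$ and using metric compatibility gives $g_M(\nabla_Y\mathcal{A}_X\xi,\phi U)=-g_M(\mathcal{A}_X\xi,\nabla_Y\phi U)$. The key computation is $\nabla_Y\phi U$: by (\ref{Nambla fi}) it equals $g_M(Y,U)\xi-\eta(U)Y+\phi\nabla_Y U$, and here the first term vanishes because $Y\perp U$ and the second because $\eta(U)=0$, so $\nabla_Y\phi U=\phi\nabla_Y U$. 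Splitting $\nabla_Y U=\mathcal{A}_Y U+\mathcal{V}\nabla_Y U$ via (\ref{3}), the piece $\phi\,\mathcal{V}\nabla_Y U$ lands in $\phi(\ker F_{\ast})$ and is therefore orthogonal to the vertical vector $\mathcal{A}_X\xi$, leaving precisely (\ref{IKE4}). For (\ref{IKE6}) I would instead start from $g_M(CY,\phi U)=0$ — which holds because $CY\in\mu$ and $\phi U\in\phi(\ker F_{\ast})$ are orthogonal — differentiate along $X$, and run the same argument; once more $\phi\,\mathcal{V}\nabla_X U\in\phi(\ker F_{\ast})$ is orthogonal to $CY\in\mu$, so only the $\phi\,\mathcal{A}_X U$ term survives.

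The one point requiring care is this bookkeeping of the horizontal/vertical and $\phi\ker F_{\ast}/\mu$ components under $\phi$; there is no genuine analytic obstacle, only the need to verify that each cross term is annihilated by the correct projection. It is worth noting that the reason (\ref{IKE4}) carries no $\eta(U)$ correction — in contrast with its vertical-$\xi$ counterpart (\ref{C3}) in Lemma 5 — is exactly that $\eta(U)$ vanishes once $\xi$ is horizontal, which is the structural feature distinguishing this lemma from the earlier one.
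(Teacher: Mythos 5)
Your proposal is correct and follows essentially the same route as the paper's proof: (\ref{IKE1})--(\ref{IKE3}) come from projecting the Sasakian identity $\nabla \xi =-\phi $ through the O'Neill decompositions (\ref{2}) and (\ref{4}), and (\ref{IKE4}), (\ref{IKE6}) come from differentiating the orthogonality relations $g_{M}(\mathcal{A}_{X}\xi ,\phi U)=0$ and $g_{M}(CY,\phi U)=0$ and discarding the $\phi (\mathcal{V}\nabla U)$ cross terms by the horizontal/vertical (resp. $\phi \ker F_{\ast }$ versus $\mu $) splitting. Your closing remark about the absence of the $\eta (U)$ correction, compared with (\ref{C3}), is also accurate.
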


\begin{proof}
By virtue of (\ref{4}), (\ref{nablaXxi}) and (\ref{IREM}) we have (\ref{IKE1}%
). Using (\ref{2}) and (\ref{nablaXxi}) we obtain (\ref{IKE2}). Since $%
\mathcal{A}_{X}\xi $ is vertical and $\phi U$ is horizontal for $X\in \Gamma
((\ker F_{\ast })^{\bot })$ and $U\in \Gamma (\ker F_{\ast })$, we have (\ref%
{IKE3}). Now using (\ref{IKE3}) we get%
\begin{equation*}
g_{M}(\nabla _{Y}\mathcal{A}_{X}\xi ,\phi U)=-g_{M}(\mathcal{A}_{X}\xi
,\nabla _{Y}\phi U)
\end{equation*}%
for $X,Y\in \Gamma ((\ker F_{\ast })^{\bot })$ and $U\in \Gamma (\ker
F_{\ast })$. Then using (\ref{3}) and (\ref{Nambla fi}) we have 
\begin{equation*}
g_{M}(\nabla _{Y}\mathcal{A}_{X}\xi ,\phi U)=-g_{M}(\mathcal{A}_{X}\xi ,\phi 
\mathcal{A}_{Y}U)-g_{M}(\mathcal{A}_{X}\xi ,\phi (\mathcal{V}\nabla _{Y}U)).
\end{equation*}%
Since $\phi (\mathcal{V}\nabla _{Y}U)\in \Gamma ((\ker F_{\ast })^{\bot }),$
we obtain (\ref{IKE4}).

From (\ref{A1}) we get 
\begin{equation*}
g_{M}(CY,\phi U)=0.
\end{equation*}%
From this relation 
\begin{eqnarray*}
0 &=&g_{M}(\nabla _{X}CY,\phi U)+g_{M}(CY,\nabla _{X}\phi U) \\
&&\overset{(\ref{Nambla fi})}{=}g_{M}(\nabla _{X}CY,\phi U)+g_{M}(CY,\phi
\nabla _{X}U) \\
&&\overset{(\ref{3})}{=}g_{M}(\nabla _{X}CY,\phi U)+g_{M}(CY,\phi (\mathcal{A%
}_{X}U)).
\end{eqnarray*}%
Hence we obtain (\ref{IKE6}).
\end{proof}

We now study the integrability of the distribution $(\ker F_{\ast })^{\bot }$
and then we investigate the geometry of leaves of $\ker F_{\ast }$ and $%
(\ker F_{\ast })^{\bot }$.

\begin{theorem}
Let $F$ be an anti-invariant Riemannian submersion from a Sasakian manifold $%
M(\phi ,\xi ,\eta ,g_{M})$ to a Riemannian manifold $(N,g_{N})$. Then the
following assertions are equivalent to each other;
\end{theorem}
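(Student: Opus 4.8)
The plan is to mirror the proof of Theorem 3, replacing the vertical-$\xi$ identity (\ref{Namblafi2}) by its horizontal-$\xi$ counterpart (\ref{Namblafi3}) and the vertical-case lemmas by Lemma 6. Since $(\ker F_\ast)^\bot$ is integrable exactly when $g_M([X,Y],V)=0$ for all $X,Y\in\Gamma((\ker F_\ast)^\bot)$ and $V\in\Gamma(\ker F_\ast)$, I would begin from
\[
g_M([X,Y],V)=g_M(\nabla_X Y,V)-g_M(\nabla_Y X,V)
\]
and substitute (\ref{Namblafi3}) for each covariant derivative. Because $V$ is vertical while $\xi$ is horizontal, the term $\eta(\nabla_X Y)\xi$ drops out against $V$, whereas the genuinely new term $-\eta(Y)\phi X$ survives and is the source of the extra $\eta$-contributions that distinguish this case from Theorem 3.

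First I would decompose $\phi X=BX+CX$ and $\phi Y=BY+CY$ via (\ref{IREM}), and rewrite the leftover $g_M(\phi X,V)$-type terms using $g_M(\phi X,V)=g_M(BX,V)=-g_M(\mathcal{A}_X\xi,V)$, which follows from (\ref{IKE1}) and from $CX\in\Gamma(\mu)$ being orthogonal to the vertical space. In the main derivative terms $g_M(\nabla_X\phi Y,\phi V)$ I would again split $\phi Y=BY+CY$; the $BY$-part, after invoking $BY=-\mathcal{A}_Y\xi$ from (\ref{IKE1}), is controlled by (\ref{IKE4}), while the $CY$-part is controlled by (\ref{IKE6}). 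Throughout I would use (\ref{3}) to isolate the $\mathcal{A}$-components of derivatives of vertical fields and the submersion relation (\ref{5}), together with the fact from (\ref{AT2}) that $\mathcal{A}$ interchanges the horizontal and vertical distributions, to convert the surviving horizontal derivatives into second-fundamental-form expressions $(\nabla F_\ast)(\cdot,\cdot)$. Collecting terms then yields the equivalence $(i)\Leftrightarrow(ii)$, with (ii) phrased through $\nabla F_\ast$ together with the $\mathcal{A}_\cdot\xi$ and $C$ terms.

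For $(ii)\Leftrightarrow(iii)$ I would proceed exactly as in Theorem 3: apply (\ref{5}) to write the difference of second fundamental forms as $-F_\ast(\nabla_Y BX-\nabla_X BY)$, then use (\ref{3}) to replace these by the horizontal tensors $\mathcal{A}_Y BX-\mathcal{A}_X BY$, which lie in $\Gamma((\ker F_\ast)^\bot)$ by (\ref{AT2}); since $F_\ast$ is an isometry on horizontal vectors, this transfers the $\nabla F_\ast$ statement into a purely intrinsic statement about $\mathcal{A}$ and $C$. The main obstacle I anticipate is bookkeeping rather than anything conceptual: here $\mu=\phi\mu\oplus\{\xi\}$ is nontrivial, so both $B$ and $C$ components genuinely appear, and the horizontal structure vector field forces several $\eta$-terms through (\ref{Namblafi3}). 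Keeping these $C$- and $\eta$-contributions correctly paired, while repeatedly using skew-symmetry of $\phi$ and of $\mathcal{A}_X$ (via (\ref{4c})) and the compatibility (\ref{metric}), is where errors are most likely to creep in.
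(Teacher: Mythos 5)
Your proposal is correct and follows essentially the same route as the paper's own proof: starting from (\ref{Namblafi3}) with the $\eta(\nabla_XY)\xi$ term killed by verticality of $V$, decomposing $\phi Y$ via (\ref{IREM}) with $BY=-\mathcal{A}_Y\xi$ from (\ref{IKE1}), controlling the two pieces by (\ref{IKE4}) and (\ref{IKE6}), and converting to $(\nabla F_\ast)$ terms via (\ref{5}), then handling $(ii)\Leftrightarrow(iii)$ through $-F_\ast(\nabla_YBX-\nabla_XBY)$ and (\ref{3}) exactly as in the vertical case. The only cosmetic difference is that the paper antisymmetrizes after simplifying each $g_M(\nabla_XY,V)$ separately rather than substituting into the bracket at the outset, which changes nothing of substance.
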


$\ i)$ $(\ker F_{\ast })^{\bot }$ \textit{is integrable.}

$ii)$ 
\begin{eqnarray*}
g_{N}((\nabla F_{\ast })(Y,\mathcal{A}_{X}\xi ),F_{\ast }\phi V)
&=&g_{N}((\nabla F_{\ast })(X,\mathcal{A}_{Y}\xi ),F_{\ast }\phi V) \\
&&+g_{M}(CX,\phi \mathcal{A}_{Y}V)-g_{M}(CY,\phi \mathcal{A}_{X}V) \\
&&+g_{M}(\mathcal{A}_{X}\xi ,V)\eta (Y)-g_{M}(\mathcal{A}_{Y}\xi ,V)\eta (X).
\end{eqnarray*}

$iii)$%
\begin{eqnarray*}
g_{M}(\mathcal{A}_{X}\mathcal{A}_{Y}\xi -\mathcal{A}_{Y}\mathcal{A}_{X}\xi
,\phi V) &=&+g_{M}(CX,\phi \mathcal{A}_{Y}V)-g_{M}(CY,\phi \mathcal{A}_{X}V)
\\
&&+g_{M}(\mathcal{A}_{X}\xi ,V)\eta (Y)-g_{M}(\mathcal{A}_{Y}\xi ,V)\eta (X)
\end{eqnarray*}%
\textit{for }$X,Y\in \Gamma ((\ker F_{\ast })^{\bot })$\textit{\ and }$V\in
\Gamma (\ker F_{\ast }).$

\begin{proof}
From (\ref{IREM}), (\ref{Namblafi3}) and (\ref{IKE1}) we have%
\begin{equation}
g_{M}(\nabla _{X}Y,V)=g_{M}(\nabla _{X}CY,\phi V)-g_{M}(\nabla _{X}\mathcal{A%
}_{Y}\xi ,\phi V)+g_{M}(\mathcal{A}_{X}\xi ,V)\eta (Y)  \label{CM1}
\end{equation}%
\textit{for }$X,Y\in \Gamma ((\ker F_{\ast })^{\bot })$\textit{\ and }$V\in
\Gamma (\ker F_{\ast }).$Using (\ref{IKE4}) in (\ref{CM1}) we obtain%
\begin{equation*}
g_{M}(\nabla _{X}Y,V)=g_{M}(\nabla _{X}CY,\phi V)-g_{M}(\mathcal{A}_{Y}\xi
,\phi \mathcal{A}_{X}V)+g_{M}(\mathcal{A}_{X}\xi ,V)\eta (Y)
\end{equation*}%
By help (\ref{IKE4}) and (\ref{IKE6}), the last relation becomes 
\begin{equation*}
g_{M}(\nabla _{X}Y,V)=-g_{M}(CY,\phi \mathcal{A}_{X}V)-g_{M}(\nabla _{X}%
\mathcal{A}_{Y}\xi ,\phi V)+g_{M}(\mathcal{A}_{X}\xi ,V)\eta (Y)
\end{equation*}%
Interchanging the role of $X$ and $Y,$ we get 
\begin{equation*}
g_{M}(\nabla _{Y}X,V)=-g_{M}(CX,\phi \mathcal{A}_{Y}V)-g_{M}(\nabla _{Y}%
\mathcal{A}_{X}\xi ,\phi V)+g_{M}(\mathcal{A}_{Y}\xi ,V)\eta (X)
\end{equation*}%
so that combining the above two relations, we have%
\begin{eqnarray*}
g_{M}([X,Y],V) &=&g_{M}(\nabla _{Y}\mathcal{A}_{X}\xi ,\phi V)-g_{M}(\nabla
_{X}\mathcal{A}_{Y}\xi ,\phi V) \\
&&+g_{M}(CX,\phi \mathcal{A}_{Y}V)-g_{M}(CY,\phi \mathcal{A}_{X}V) \\
&&+g_{M}(\mathcal{A}_{X}\xi ,V)\eta (Y)-g_{M}(\mathcal{A}_{Y}\xi ,V)\eta (X).
\end{eqnarray*}%
Since differential $F_{\ast }$ preserves the lenghts of horizontal vectors
we obtain 
\begin{eqnarray*}
g_{M}([X,Y],V) &=&g_{N}(F_{\ast }\nabla _{Y}\mathcal{A}_{X}\xi ,F_{\ast
}\phi V)-g_{N}(F_{\ast }\nabla _{X}\mathcal{A}_{Y}\xi ,F_{\ast }\phi V) \\
&&+g_{M}(CX,\phi \mathcal{A}_{Y}V)-g_{M}(CY,\phi \mathcal{A}_{X}V) \\
&&+g_{M}(\mathcal{A}_{X}\xi ,V)\eta (Y)-g_{M}(\mathcal{A}_{Y}\xi ,V)\eta (X).
\end{eqnarray*}%
Using (\ref{5}) we have%
\begin{eqnarray*}
g_{M}([X,Y],V) &=&g_{N}(-(\nabla F_{\ast })(Y,\mathcal{A}_{X}\xi ),F_{\ast
}\phi V)-g_{N}(-(\nabla F_{\ast })(X,\mathcal{A}_{Y}\xi ),F_{\ast }\phi V) \\
&&+g_{M}(CX,\phi \mathcal{A}_{Y}V)-g_{M}(CY,\phi \mathcal{A}_{X}V) \\
&&+g_{M}(\mathcal{A}_{X}\xi ,V)\eta (Y)-g_{M}(\mathcal{A}_{Y}\xi ,V)\eta (X).
\end{eqnarray*}%
which proves $(i)\Leftrightarrow (ii).$

On the other hand using (\ref{5}) we get%
\begin{equation*}
(\nabla F_{\ast })(Y,BX)-(\nabla F_{\ast })(X,BY)=-F_{\ast }(\nabla
_{Y}BX-\nabla _{X}BY).
\end{equation*}%
Using(\ref{3}) and (\ref{IKE1}) \ we obtain%
\begin{eqnarray*}
g_{N}(-F_{\ast }(\mathcal{A}_{Y}\mathcal{A}_{X}\xi -\mathcal{A}_{X}\mathcal{A%
}_{Y}\xi ),F_{\ast }\phi V) &=&g_{M}(CX,\phi \mathcal{A}_{Y}V)-g_{M}(CY,\phi 
\mathcal{A}_{X}V) \\
&&+g_{M}(\mathcal{A}_{X}\xi ,V)\eta (Y)-g_{M}(\mathcal{A}_{Y}\xi ,V)\eta (X).
\end{eqnarray*}%
which tells that $(ii)\Leftrightarrow (iii).$
\end{proof}

\begin{remark}
We assume that $(\ker F_{\ast })^{\bot }=\phi \ker F_{\ast }\oplus \{\xi \}.$
Using (\ref{IREM}) one can prove that $CX=0$.
\end{remark}

\begin{theorem}
Let $M(\phi ,\xi ,\eta ,g_{M})$ be a Sasakian manifold \ of dimension $2m+1$
and $(N,g_{N})$ is a Riemannian manifold of dimension $n.$ Let $F:M(\phi
,\xi ,\eta ,g_{M})\rightarrow $ $(N,g_{N})$ be an anti-invariant Riemannian
submersion such that $(\ker F_{\ast })^{\bot }=\phi \ker F_{\ast }\oplus
\{\xi \}.$ Then $(\ker F_{\ast })^{\bot }$ \textit{is not integrable.}
\end{theorem}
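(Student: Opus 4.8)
The plan is to prove non-integrability by exhibiting a single pair of horizontal vector fields whose bracket has a nonzero vertical part, i.e.\ by showing that the O'Neill tensor $\mathcal{A}$ does not vanish on $(\ker F_{\ast })^{\bot }$. By Lemma 1 $(ii)$, integrability of $(\ker F_{\ast })^{\bot }$ is equivalent to $\mathcal{A}_{X}Y=\frac{1}{2}\mathcal{V}[X,Y]=0$ for all horizontal $X,Y$; in particular it would force $\mathcal{A}_{X}\xi =0$ for every horizontal $X$, since $\xi $ is horizontal under the present hypothesis. So it suffices to produce a horizontal $X$ with $\mathcal{A}_{X}\xi \neq 0$.

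First I would record the key algebraic identity. By Remark 4 the hypothesis $(\ker F_{\ast })^{\bot }=\phi \ker F_{\ast }\oplus \{\xi \}$ gives $CX=0$, so (\ref{IREM}) reduces to $\phi X=BX$ for every $X\in \Gamma ((\ker F_{\ast })^{\bot })$. Combining this with (\ref{IKE1}), namely $BX=-\mathcal{A}_{X}\xi $, yields
\[
\mathcal{A}_{X}\xi =-\phi X
\]
for all horizontal $X$. Thus $\mathcal{A}_{X}\xi $ vanishes only where $\phi X=0$, and the whole question collapses to whether $\phi $ can annihilate a nonzero horizontal field.

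Next I would evaluate this identity on a convenient test field. Since the fibre dimension equals $2m-n+1=m\geq 1$ by Theorem 7, there is a nonzero vertical $U\in \Gamma (\ker F_{\ast })$; then $\phi U\in \Gamma (\phi \ker F_{\ast })\subset \Gamma ((\ker F_{\ast })^{\bot })$ is horizontal. Because $\xi $ is horizontal we have $\eta (U)=g_{M}(U,\xi )=0$, so (\ref{phi^2}) gives $\phi ^{2}U=-U$. Taking $X=\phi U$ in the identity above then produces
\[
\mathcal{A}_{\phi U}\xi =-\phi ^{2}U=U\neq 0 .
\]
Hence $\mathcal{V}[\phi U,\xi ]=2\mathcal{A}_{\phi U}\xi =2U\neq 0$, so $[\phi U,\xi ]$ is not horizontal and $(\ker F_{\ast })^{\bot }$ cannot be integrable.

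The computation is short; the only real care lies in assembling the earlier identities correctly---chiefly noticing that the hypothesis kills $C$ (Remark 4) and that $\xi $ being horizontal forces $\eta $ to vanish on the fibres, which is exactly what collapses $\phi ^{2}U$ to $-U$. These two observations turn the general integrability criterion of Theorem 10 into an outright obstruction, so I expect the main (and rather modest) obstacle to be selecting the test field $\phi U$ and tracking signs, rather than any substantive analytic difficulty.
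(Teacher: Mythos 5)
Your proof is correct, and it takes a genuinely different route from the paper's. The paper argues by contradiction through curvature: assuming $(\ker F_{\ast })^{\bot }$ integrable, it manipulates the structure equations to obtain $R(X,Y)\xi =\mathcal{A}_{X}\mathcal{A}_{Y}\xi -\mathcal{A}_{Y}\mathcal{A}_{X}\xi $, then invokes the fact that integrability forces $\mathcal{A}=0$, so $R(X,Y)\xi =0$ for horizontal $X,Y$, contradicting the Sasakian identity (\ref{R(xi,,X)Y}). Your argument never touches the curvature tensor: you combine Remark 4 ($C=0$ under the hypothesis) with (\ref{IKE1}) to get the pointwise identity $\mathcal{A}_{X}\xi =-\phi X$ for all horizontal $X$, and then evaluate at $X=\phi U$ (using $\eta (U)=0$, so $\phi ^{2}U=-U$) to exhibit $\mathcal{A}_{\phi U}\xi =U\neq 0$, i.e.\ a concrete pair $(\phi U,\xi )$ whose bracket has nonzero vertical part $2U$. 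What your approach buys is economy and explicitness: the obstruction is shown to be first-order and algebraic, and the identity $\mathcal{A}_{X}\xi =-\phi X$ quantifies exactly how the O'Neill tensor fails to vanish, which is sharper information than a bare contradiction. What the paper's approach buys is the intermediate relation (\ref{CM2}) tying $\mathcal{A}$ to $R(X,Y)\xi $, which links the non-integrability to the Sasakian curvature structure. Two small points: your closing reference to ``Theorem 10'' should be to Theorem 8 (the integrability criterion for horizontal $\xi $); and both your proof and the paper's implicitly use $m\geq 1$ (equivalently $\ker F_{\ast }\neq \{0\}$), which is automatic for any genuine submersion from a Sasakian manifold of dimension at least $3$.
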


\begin{proof}
From (\ref{metric}) it follows that 
\begin{equation*}
\phi (\nabla _{X}Y)=\nabla _{X}BY-g(X,Y)\xi +\eta (Y)X
\end{equation*}%
for $X,Y\in \Gamma ((\ker F_{\ast })^{\bot })$\textit{\ .}Interchanging the
role of $X$ and $Y,$ we get%
\begin{equation*}
\phi (\nabla _{Y}X)=\nabla _{Y}BX-g(X,Y)\xi +\eta (X)Y
\end{equation*}%
so that combining the above two relations, we have%
\begin{equation*}
\phi (\left[ X,Y\right] )=\nabla _{X}BY-\nabla _{Y}BX+\eta (Y)X-\eta (X)Y.
\end{equation*}%
Using (\ref{3}), (\ref{metric}), (\ref{IKE1}) and (\ref{R(xi,,X)Y}) one
obtains%
\begin{equation*}
\phi (\left[ X,Y\right] )=\mathcal{A}_{X}BY-\mathcal{A}_{Y}BX+\mathcal{V}%
\nabla _{X}BY-\mathcal{V}\nabla _{Y}BX+R(X,Y)\xi
\end{equation*}%
If $(\ker F_{\ast })^{\bot }$ is integrable we have%
\begin{equation}
R(X,Y)\xi =\mathcal{A}_{X}\mathcal{A}_{Y}\xi -\mathcal{A}_{Y}\mathcal{A}%
_{X}\xi .  \label{CM2}
\end{equation}

On the otherhand, we know that if $\mathcal{H}=(\ker F_{\ast })^{\bot }$ is
integrable then $\mathcal{A}=0.$ Hence the last relation led to the
contradiction with (\ref{R(xi,,X)Y}).
\end{proof}

From (\ref{4}) and (\ref{nablaXxi}), we can give following Theorem.

\begin{theorem}
Let $M(\phi ,\xi ,\eta ,g_{M})$ be a Sasakian manifold \ of dimension $2m+1$
and $(N,g_{N})$ is a Riemannian manifold of dimension $n.$ Let $F:M(\phi
,\xi ,\eta ,g_{M})\rightarrow $ $(N,g_{N})$ be an anti-invariant Riemannian
submersion such that $\phi (\ker F_{\ast })\subset (\ker F_{\ast })^{\bot }.$
Then $(\ker F_{\ast })^{\bot }$ does not define\textit{\ a totally geodesic
foliation on }$M.$
\end{theorem}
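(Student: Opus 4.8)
The plan is to restate the totally geodesic foliation condition as the vanishing of a single tensor and then exhibit one explicit horizontal pair that violates it. Recall that $(\ker F_{\ast })^{\bot }$ defines a totally geodesic foliation on $M$ precisely when $\nabla _{X}Y\in \Gamma ((\ker F_{\ast })^{\bot })$ for all $X,Y\in \Gamma ((\ker F_{\ast })^{\bot })$. By (\ref{4}) this is equivalent to $\mathcal{A}_{X}Y=0$ for all such $X,Y$, since $\mathcal{A}_{X}Y=\mathcal{V}\nabla _{X}Y$ is exactly the vertical component of $\nabla _{X}Y$ for horizontal $X,Y$. So I would assume, for contradiction, that $\mathcal{A}_{X}Y=0$ on all of $\Gamma ((\ker F_{\ast })^{\bot })$.

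The decisive move is to feed the characteristic field $\xi$ into this condition, which is legitimate here because $\xi \in \Gamma ((\ker F_{\ast })^{\bot })$ in the horizontal setting. Taking $Y=\xi$ and invoking (\ref{nablaXxi}), I write $\nabla _{X}\xi =-\phi X$ for any horizontal $X$. Its vertical part is controlled by $B$: indeed (\ref{IKE1}) gives $\mathcal{A}_{X}\xi =-BX$, so the totally geodesic hypothesis forces $BX=0$ for every $X\in \Gamma ((\ker F_{\ast })^{\bot })$.

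To see that $B$ cannot vanish identically, I would test it on a horizontal vector of the form $X=\phi V$ with $V\in \Gamma (\ker F_{\ast })$ nonzero. By anti-invariance $\phi V\in \phi (\ker F_{\ast })\subset (\ker F_{\ast })^{\bot }$, so $\phi V$ is a valid horizontal test vector. Then (\ref{phi^2}) yields $\phi (\phi V)=\phi ^{2}V=-V+\eta (V)\xi$, and since $\xi$ is horizontal while $V$ is vertical one has $\eta (V)=g_{M}(V,\xi )=0$; hence $\phi (\phi V)=-V$, whose vertical part is $-V$. Thus $B(\phi V)=-V\neq 0$, equivalently $\mathcal{A}_{\phi V}\xi =V\neq 0$, contradicting $BX\equiv 0$. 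This contradiction shows that $(\ker F_{\ast })^{\bot }$ does not define a totally geodesic foliation on $M$.

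There is no serious obstacle here: once the totally geodesic condition is rewritten as $\mathcal{A}_{X}Y=0$ and $\xi$ is inserted in the second slot, the argument collapses to a one-line contradiction. The only point requiring a little care is confirming that $\phi V$ genuinely lies in $(\ker F_{\ast })^{\bot }$ (so it is admissible) and that $\eta (V)$ drops out; both are immediate from the standing assumptions that $\phi (\ker F_{\ast })\subset (\ker F_{\ast })^{\bot }$ and that $\xi$ is horizontal. I note that this parallels the proof of Theorem 9, where the same interaction between $\xi$ and $\phi$ obstructs integrability of $(\ker F_{\ast })^{\bot }$.
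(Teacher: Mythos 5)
Your proof is correct and is essentially the paper's own argument: the paper proves this theorem precisely "from (\ref{4}) and (\ref{nablaXxi})", i.e.\ by identifying the vertical part of $\nabla _{X}Y$ with $\mathcal{A}_{X}Y$ and then using $\nabla _{X}\xi =-\phi X$ with the horizontal field $\xi $ in the second slot, which is exactly your route. You have simply made explicit the details the paper leaves implicit (the admissible test vector $X=\phi V$, the vanishing of $\eta (V)$, and the resulting nonzero vertical component $\mathcal{A}_{\phi V}\xi =V$), all of which are carried out correctly under the subsection's standing assumption that $\xi $ is horizontal.
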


For the distribution $\ker F_{\ast }$, we have;

\begin{theorem}
Let $F$be an anti-invariant Riemannian submersion from a Sasakian manifold $%
M(\phi ,\xi ,\eta ,g_{M})$ to a Riemannian manifold $(N,g_{N})$. Then the
following assertions are equivalent to each other;
\end{theorem}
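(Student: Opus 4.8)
The statement is cut off at the list of assertions, but in view of its placement (the horizontal-$\xi$ analogue of Theorems 3--4 and 8, and following the negative result Theorem 10 about $(\ker F_{\ast})^{\bot}$), the natural reading is that it gives equivalent conditions for $\ker F_{\ast}$ to define a totally geodesic foliation on $M$. My plan is to reduce everything to one scalar computation. By the decomposition $\nabla_{U}V=\mathcal{T}_{U}V+\hat{\nabla}_{U}V$ of (\ref{1}), the foliation $\ker F_{\ast}$ is totally geodesic precisely when $\mathcal{T}_{U}V=0$, i.e. when $g_{M}(\nabla_{U}V,X)=0$ for all $U,V\in\Gamma(\ker F_{\ast})$ and $X\in\Gamma((\ker F_{\ast})^{\bot})$. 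It is worth noting that, unlike the vertical case (where Theorem 2 forbids this because $\mathcal{T}_{U}\xi=-\phi U\neq 0$), here $\xi$ is horizontal and (\ref{IKE2}) gives $\mathcal{T}_{U}\xi=0$, so totally geodesic fibers are not excluded and the theorem is not vacuous.

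The technical heart is to compute $g_{M}(\nabla_{U}V,X)$. First I would use the Sasakian identity (\ref{Nambla fi}) together with $\eta(V)=0$ (valid since $\xi$ is horizontal and $V$ is vertical) to get $\phi\nabla_{U}V=\nabla_{U}\phi V-g_{M}(U,V)\xi$. Applying (\ref{metric}) in the form $g_{M}(A,X)=g_{M}(\phi A,\phi X)+\eta(A)\eta(X)$ with $A=\nabla_{U}V$, and noting that $\eta(\nabla_{U}V)=g_{M}(V,\phi U)=0$ (because $\phi U$ is horizontal while $V$ is vertical) and that $g_{M}(\xi,\phi X)=0$ since $\phi\xi=0$, I would arrive at the clean identity
\begin{equation*}
g_{M}(\nabla_{U}V,X)=g_{M}(\nabla_{U}\phi V,\phi X).
\end{equation*}
Every subsequent step is just a reorganization of this one equation.

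Next I would insert the decomposition $\phi X=BX+CX$ of (\ref{IREM}), with $BX\in\Gamma(\ker F_{\ast})$ and $CX\in\Gamma(\mu)$, and split $\nabla_{U}\phi V=\mathcal{H}\nabla_{U}\phi V+\mathcal{T}_{U}\phi V$ via (\ref{2}); since the mixed horizontal--vertical pairings vanish this yields
\begin{equation*}
g_{M}(\nabla_{U}V,X)=g_{M}(\mathcal{H}\nabla_{U}\phi V,CX)+g_{M}(\mathcal{T}_{U}\phi V,BX).
\end{equation*}
To express the first term intrinsically I would use the symmetry of the second fundamental form, $(\nabla F_{\ast})(U,\phi V)=(\nabla F_{\ast})(\phi V,U)=-F_{\ast}(\nabla_{\phi V}U)$ (from (\ref{5}) with $F_{\ast}U=0$), and then (\ref{3}) to obtain $(\nabla F_{\ast})(U,\phi V)=-F_{\ast}(\mathcal{A}_{\phi V}U)$; because $F_{\ast}$ preserves horizontal lengths and $F_{\ast}CX=F_{\ast}\phi X$, this identifies $g_{M}(\mathcal{H}\nabla_{U}\phi V,CX)$ with $-g_{N}((\nabla F_{\ast})(U,\phi V),F_{\ast}\phi X)$. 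Substituting $BX=-\mathcal{A}_{X}\xi$ from (\ref{IKE1}) in the second term then produces the $(\nabla F_{\ast})$-form, which should be assertion (ii), while reading the same equality entirely through the O'Neill tensors (keeping $g_{M}(\mathcal{A}_{\phi V}U,CX)$ in place of the $(\nabla F_{\ast})$-term, using (\ref{5})--(\ref{5a})) gives the $\mathcal{A}$--$\mathcal{T}$ form, assertion (iii). Thus (i)$\Leftrightarrow$(ii)$\Leftrightarrow$(iii) all come from vanishing of a single expression.

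The main obstacle I anticipate is the bookkeeping of the Sasakian correction terms and making the $(\nabla F_{\ast})$-conversion rigorous: one must track at each step which $\eta$- and $\xi$-terms drop out because $\xi$ is horizontal, and one must handle $\mathcal{H}\nabla_{U}\phi V$ tensorially. The cleanest way around the latter is to exploit that $\mathcal{T}_{U}V$ (hence $g_{M}(\nabla_{U}V,X)$) is tensorial, so $\phi V$ may be taken basic, whereupon Lemma 2 gives $\mathcal{H}\nabla_{U}\phi V=\mathcal{A}_{\phi V}U$; otherwise one works directly through the definition (\ref{5}) and the vanishing (\ref{5a}) on horizontal pairs. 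Once the identity $g_{M}(\nabla_{U}V,X)=g_{M}(\nabla_{U}\phi V,\phi X)$ is secured, the three equivalences follow by the same reshuffling used in the proofs of Theorems 3, 4 and 8.
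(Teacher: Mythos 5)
You identified the truncated statement correctly: it is the theorem in the horizontal-$\xi $ subsection whose assertions are $i)$ $\ker F_{\ast }$ defines a totally geodesic foliation on $M$, $ii)$ $g_{N}((\nabla F_{\ast })(V,\phi X),F_{\ast }\phi W)=0$, and $iii)$ $\mathcal{T}_{V}BX+\mathcal{A}_{CX}V\in \Gamma (\mu )$; and your argument is sound, sharing the paper's overall skeleton (reduce everything to one scalar identity, then translate between $\nabla $, the O'Neill tensors and $\nabla F_{\ast }$ via (\ref{5}) and (\ref{5a})). The genuine difference is the pivot of the computation, and it changes which assertions fall out. The paper keeps $\phi W$ undifferentiated and moves the derivative onto $\phi X$: from $g_{M}(\nabla _{V}W,X)=-g_{M}(W,\nabla _{V}X)$ together with (\ref{metric}), (\ref{Nambla fi}), (\ref{IREM}) it gets $g_{M}(\nabla _{V}W,X)=-g_{M}(\phi W,\nabla _{V}\phi X)=g_{N}(F_{\ast }\phi W,(\nabla F_{\ast })(V,\phi X))$, a single term which is literally assertion $ii)$; then writing $\nabla _{V}\phi X=\nabla _{V}BX+[V,CX]+\nabla _{CX}V$ and discarding the vertical pieces against $\phi W$ yields $-g_{M}(\phi W,\mathcal{T}_{V}BX+\mathcal{A}_{CX}V)$, hence assertion $iii)$ as a containment in $\Gamma (\mu )$. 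You instead differentiate $\phi V$ and pair with $\phi X$, arriving at $g_{M}(\nabla _{U}V,X)=g_{M}(\mathcal{A}_{\phi V}U,CX)+g_{M}(\mathcal{T}_{U}\phi V,BX)$. That identity is correct, and it agrees with the paper's expression, since by (\ref{4b}), (\ref{4c}) and $\mathcal{A}_{X}Y=-\mathcal{A}_{Y}X$ one has $g_{M}(\mathcal{T}_{U}\phi V,BX)=-g_{M}(\phi V,\mathcal{T}_{U}BX)$ and $g_{M}(\mathcal{A}_{\phi V}U,CX)=-g_{M}(\phi V,\mathcal{A}_{CX}U)$; but the conditions your route produces are the two-term ones, e.g. $g_{N}((\nabla F_{\ast })(U,\phi V),F_{\ast }\phi X)=g_{M}(\mathcal{T}_{U}\phi V,BX)$, rather than the paper's one-term $ii)$ and $iii)$, so to prove the theorem as actually stated you must append exactly those routine skew-symmetry conversions, i.e. move $\phi $ back onto the vertical slot. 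What your route gains is transparency about the Sasakian corrections: your observations that $\eta (V)=0$, $\eta (\nabla _{U}V)=g_{M}(V,\phi U)=0$, $g_{M}(\xi ,\phi X)=0$, and that (\ref{IKE2}) (in contrast with $\mathcal{T}_{U}\xi =-\phi U$ of the vertical case, Theorem 2) keeps the theorem from being vacuous, are all correct and are points the paper leaves implicit; what the paper's pivot gains is that its stated assertions appear with no further manipulation.
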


$\ i)$ $(\ker F_{\ast })$ \textit{defines a totally geodesic foliation on }$%
M.$

$ii)$ $g_{N}(\nabla F_{\ast })(V,\phi X),F_{\ast }\phi W)=0$ \ \ for $X\in
\Gamma ((\ker F_{\ast })^{\bot })$ and $V,W\in \Gamma (\ker F_{\ast }).$

$iii)$ $\mathcal{T}_{V}BX+\mathcal{A}_{CX}V\in \Gamma (\mu ).$

\begin{proof}
Since $g_{M}(W,X)=0$ we have $g_{M}(\nabla _{V}W,X)=-g(W,\nabla _{V}X).$
From (\ref{metric}) and (\ref{IREM}) we get $g_{M}(\nabla
_{V}W,X)=-g_{M}(\phi W,\nabla _{V}BX)-g_{M}(\phi W,\nabla _{V}CX).$ Using (%
\ref{1}) and (\ref{2}) we obtain $g_{M}(\nabla _{V}W,X)=-g_{M}(\phi W,\nabla
_{V}\phi X).$ Then Riemannian submersion $F$ and (\ref{5}) imply that 
\begin{equation*}
g_{M}(\nabla _{V}W,X)=g_{N}(F_{\ast }\phi W,(\nabla F_{\ast })(V,\phi X))
\end{equation*}%
which is $(i)\Leftrightarrow (ii).$ By direct calculation, we derive%
\begin{equation*}
g_{N}(F_{\ast }\phi W,(\nabla F_{\ast })(V,\phi X))=-g_{M}(\phi W,\nabla
_{V}\phi X).
\end{equation*}%
Using (\ref{IREM})we have 
\begin{equation*}
g_{N}(F_{\ast }\phi W,(\nabla F_{\ast })(V,\phi X))=-g_{M}(\phi W,\nabla
_{V}BX+\nabla _{V}CX).
\end{equation*}%
Hence we get%
\begin{equation*}
g_{N}(F_{\ast }\phi W,(\nabla F_{\ast })(V,\phi X))=-g_{M}(\phi W,\nabla
_{V}BX+\left[ V,CX\right] +\nabla _{CX}V).
\end{equation*}%
Since $\left[ V,CX\right] \in \Gamma (\ker F_{\ast }),$ using (\ref{1}) and (%
\ref{3}), we obtain%
\begin{equation*}
g_{N}(F_{\ast }\phi W,(\nabla F_{\ast })(V,\phi X))=-g_{M}(\phi W,\mathcal{T}%
_{V}BX+\mathcal{A}_{CX}V).
\end{equation*}%
This shows $(ii)\Leftrightarrow (iii).$
\end{proof}

\begin{corollary}
Let $F:M(\phi ,\xi ,\eta ,g_{M})\rightarrow $ $(N,g_{N})$ be an
anti-invariant Riemannian submersion such that $(\ker F_{\ast })^{\bot
}=\phi \ker F_{\ast }\oplus \{\xi \},$ where $M(\phi ,\xi ,\eta ,g_{M})$ is
a Sasakian manifold and $(N,g_{N})$ is a Riemannian manifold. Then following
assertions are equivalent to each other;

$i)(\ker F_{\ast })$ \textit{defines a totally geodesic foliation on }$M.$

$ii)(\nabla F_{\ast })(V,\phi X)=0,$ for $X\in \Gamma ((\ker F_{\ast
})^{\bot })$ and $V,W\in \Gamma (\ker F_{\ast }).$

$iii)$ $\mathcal{T}_{V}\phi W=0.$
\end{corollary}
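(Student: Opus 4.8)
The plan is to read off this corollary from Theorem~11 in the special case $(\ker F_{\ast })^{\bot }=\phi \ker F_{\ast }\oplus \{\xi \}$, where the Remark following the integrability theorem gives $CX=0$ for every $X\in \Gamma ((\ker F_{\ast })^{\bot })$. Thus $\mu =\mathrm{span}\{\xi \}$, $\phi X=BX\in \Gamma (\ker F_{\ast })$ is vertical for horizontal $X$, and $\phi $ sends $(\ker F_{\ast })^{\bot }$ into $\ker F_{\ast }$ with kernel $\mathrm{span}\{\xi \}$. First I would record an auxiliary fact: by (\ref{4b}) together with $\mathcal{T}_{V}\xi =0$ from (\ref{IKE2}), for vertical $V,W$ one gets $g_{M}(\mathcal{T}_{V}W,\xi )=-g_{M}(\mathcal{T}_{V}\xi ,W)=0$, so $\mathcal{T}_{V}W$ has no $\xi $-component and lies in $\phi \ker F_{\ast }$, on which $\phi $ is injective (for $Z=\phi V'$ with $V'\in \ker F_{\ast }$, (\ref{phi^2}) gives $\phi Z=-V'$ since $\eta (V')=0$).

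For $(i)\Leftrightarrow (iii)$ I would expand $\nabla _{V}\phi W=(\nabla _{V}\phi )W+\phi \nabla _{V}W$ using (\ref{Nambla fi}) and the decomposition (\ref{1}). Because $\eta (W)=0$ and $\xi $ is horizontal, the term $(\nabla _{V}\phi )W=g_{M}(V,W)\xi $ is horizontal, $\phi \mathcal{T}_{V}W=B\mathcal{T}_{V}W$ is vertical (here $C=0$ is essential), and $\phi \hat{\nabla }_{V}W$ is horizontal; projecting onto the vertical distribution (via (\ref{2}), since $\phi W$ is horizontal) collapses this to the single identity $\mathcal{T}_{V}\phi W=\phi \mathcal{T}_{V}W$. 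Combined with the auxiliary fact, $\mathcal{T}_{V}\phi W=0$ for all $V,W$ is then equivalent to $\phi \mathcal{T}_{V}W=0$, hence to $\mathcal{T}_{V}W=0$, hence to the fibres being totally geodesic, which is precisely $(i)$.

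For $(i)\Leftrightarrow (ii)$ I would reprise the computation in the proof of Theorem~11, which yields $g_{M}(\nabla _{V}W,X)=g_{N}(F_{\ast }\phi W,(\nabla F_{\ast })(V,\phi X))$; since $(i)$ says exactly that $g_{M}(\nabla _{V}W,X)=0$ for all vertical $V,W$ and horizontal $X$, this makes $(i)$ equivalent to $(\nabla F_{\ast })(V,\phi X)\perp F_{\ast }(\phi \ker F_{\ast })$, and $(ii)\Rightarrow (i)$ is immediate. The main obstacle is the reverse upgrade to the full vanishing $(\nabla F_{\ast })(V,\phi X)=0$: since $\xi $ is horizontal, $F_{\ast }(\phi \ker F_{\ast })$ is a proper subspace of $TN$ missing the $F_{\ast }\xi $ direction (this is (\ref{Ba}) with $\mu =\mathrm{span}\{\xi \}$), so I must check separately that the $F_{\ast }\xi $-component of $(\nabla F_{\ast })(V,\phi X)$ vanishes automatically. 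Applying the direct calculation of Theorem~11 to the horizontal field $\xi $ gives $g_{N}(F_{\ast }\xi ,(\nabla F_{\ast })(V,\phi X))=-g_{M}(\xi ,\nabla _{V}\phi X)$, and then $g_{M}(\xi ,\phi X)=0$, $\nabla _{V}\xi =-\phi V$ from (\ref{nablaXxi}), and (\ref{metric}) give $g_{M}(\xi ,\nabla _{V}\phi X)=g_{M}(\phi V,\phi X)=g_{M}(V,X)-\eta (V)\eta (X)=0$. As $TN=F_{\ast }(\phi \ker F_{\ast })\oplus F_{\ast }(\mathrm{span}\{\xi \})$, this forces $(\nabla F_{\ast })(V,\phi X)=0$, giving $(i)\Rightarrow (ii)$ and completing the corollary.
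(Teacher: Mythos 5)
Your proposal is correct, and it does more than the paper, which states this corollary with no proof at all: the intended reading is that it drops out of Theorem 11 once one specializes to $(\ker F_{\ast })^{\bot }=\phi \ker F_{\ast }\oplus \{\xi \}$ and invokes Remark 4 (i.e. $C=0$, so $\mu =\mathrm{span}\{\xi \}$). Your treatment of $(i)\Leftrightarrow (ii)$ follows that intended route (reprising the identity $g_{M}(\nabla _{V}W,X)=g_{N}(F_{\ast }\phi W,(\nabla F_{\ast })(V,\phi X))$ from the proof of Theorem 11), but you correctly spot and close the gap the paper glosses over: Theorem 11$(ii)$ only gives orthogonality of $(\nabla F_{\ast })(V,\phi X)$ to $F_{\ast }(\phi \ker F_{\ast })$, whereas the corollary asserts full vanishing, and because $\mu \neq \{0\}$ here there is a residual $F_{\ast }\xi $-component; your verification $g_{N}(F_{\ast }\xi ,(\nabla F_{\ast })(V,\phi X))=-g_{M}(\xi ,\nabla _{V}\phi X)=-g_{M}(\phi V,\phi X)=0$, using (\ref{nablaXxi}) and (\ref{metric}), is exactly the missing step and appears nowhere in the paper. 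For $(i)\Leftrightarrow (iii)$ you take a genuinely different route from the implicit one: instead of specializing Theorem 11$(iii)$ (which yields $\mathcal{T}_{V}BX\in \Gamma (\mu )$ and would still require noting that $BX$ ranges over all of $\Gamma (\ker F_{\ast })$ and that $\mathcal{T}_{V}BX\perp \xi $), you derive $\mathcal{T}_{V}\phi W=\phi \mathcal{T}_{V}W$ --- which is the paper's own identity (\ref{S5}), proved only later inside Theorem 13, and, as you rightly stress, valid precisely because $C=0$ --- and combine it with $g_{M}(\mathcal{T}_{V}W,\xi )=0$ (from (\ref{4b}) and (\ref{IKE2})) and the injectivity of $\phi $ on $\phi \ker F_{\ast }$. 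Both routes identify all three conditions with $\mathcal{T}\equiv 0$ on the fibres; yours is more self-contained and makes explicit why the $\xi $-direction causes no harm, which is the only real subtlety separating this corollary from a mechanical restatement of Theorem 11.
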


\begin{theorem}
Let $F:M(\phi ,\xi ,\eta ,g_{M})\rightarrow $ $(N,g_{N})$ be an
anti-invariant Riemannian submersion such that $(\ker F_{\ast })^{\bot
}=\phi \ker F_{\ast }\oplus \{\xi \},$ where $M(\phi ,\xi ,\eta ,g_{M})$ is
a Sasakian manifold and $(N,g_{N})$ is a Riemannian manifold. Then $F$ is a
totally geodesic map if and only if%
\begin{equation}
\mathcal{T}_{W}\phi V=0,\text{ \ \ }\forall \text{ }W,\text{ }V\in \Gamma
(\ker F_{\ast })  \label{S1}
\end{equation}%
and%
\begin{equation}
\mathcal{A}_{X}\phi W=0,\text{ \ }\forall X\in \Gamma ((\ker F_{\ast
})^{\bot }),\forall \text{ }W\in \Gamma (\ker F_{\ast }).\text{\ }
\label{S2}
\end{equation}
\end{theorem}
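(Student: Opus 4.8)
The plan is to characterize totally geodesic of $F$ via the second fundamental form $(\nabla F_{\ast})$, and to show it vanishes on all pairs of tangent vectors by splitting $TM = \ker F_{\ast} \oplus (\ker F_{\ast})^{\bot}$ and checking the three types of pairs separately. Recall that $F$ is totally geodesic means $(\nabla F_{\ast})(E_1,E_2)=0$ for all $E_1,E_2\in\Gamma(TM)$. By (\ref{5a}), the horizontal-horizontal case $(\nabla F_{\ast})(X,Y)=0$ holds automatically for any Riemannian submersion, so nothing is required there. Hence the content is to analyze the mixed case (one vertical, one horizontal) and the vertical-vertical case, and to translate the vanishing of $(\nabla F_{\ast})$ on those pairs into conditions on $\mathcal{T}$ and $\mathcal{A}$.

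For the \emph{vertical-vertical} case, I would compute $(\nabla F_{\ast})(V,W)$ for $V,W\in\Gamma(\ker F_{\ast})$. Using the definition (\ref{5}), $(\nabla F_{\ast})(V,W)=-F_{\ast}(\nabla_V W)$, and by (\ref{1}) only the horizontal part $\mathcal{T}_V W$ survives under $F_{\ast}$. The natural move here is to exploit the Sasakian structure to rewrite things in terms of $\phi$: since $\phi(\ker F_{\ast})\subseteq(\ker F_{\ast})^{\bot}$ and $F_{\ast}$ is injective on horizontal vectors, the vanishing of $(\nabla F_{\ast})(V,W)$ should be equivalent to a statement about $\mathcal{T}_W\phi V$. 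The identity (\ref{S15}), namely $\mathcal{T}_V\phi W=\phi\mathcal{T}_V W-\eta(W)V+g(V,W)\xi$, together with the fact that $\xi$ is now horizontal, is the key computational tool to convert $\mathcal{T}_V W$ into $\mathcal{T}_W\phi V$; this is what produces condition (\ref{S1}).

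For the \emph{mixed} case $(\nabla F_{\ast})(V,X)$ with $V\in\Gamma(\ker F_{\ast})$ and $X\in\Gamma((\ker F_{\ast})^{\bot})$, I would again apply (\ref{5}) and express $\nabla_V X$ via (\ref{2}) as $\mathcal{H}\nabla_V X+\mathcal{T}_V X$. The idea is to decompose $X=\phi(\text{something})$ or, more directly, to use $\phi X=BX+CX$ from (\ref{IREM}) and push the computation through $\phi$ using (\ref{Nambla fi}); tracking the horizontal component and using $\mathcal{H}\nabla_V X=\mathcal{A}_X V$ (from Lemma 2) should reduce the vanishing condition to $\mathcal{A}_X\phi W=0$, giving (\ref{S2}). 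Here the appearance of $\phi W$ rather than $W$ reflects that the relevant horizontal directions are exactly $\phi(\ker F_{\ast})$, and one must use the submersion injectivity of $F_{\ast}$ on horizontal vectors to conclude.

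The main obstacle I anticipate is bookkeeping: ensuring that the two stated conditions (\ref{S1}) and (\ref{S2}) are genuinely \emph{sufficient} as well as necessary, i.e. that no additional term involving $\xi$, the $\mu$-component $CX$, or the $\eta$-corrections from (\ref{S15}) is left over. In particular, because $\xi$ is horizontal in this subsection, terms like $\eta(Y)\phi X$ and $\eta(\nabla_X Y)\xi$ from (\ref{Namblafi3}) can intrude, and one must verify they either cancel or are subsumed by (\ref{S1})--(\ref{S2}). The cleanest route is to show that $(\nabla F_{\ast})(V,W)=0\ \forall V,W$ is equivalent to (\ref{S1}) and that $(\nabla F_{\ast})(V,X)=0\ \forall V,X$ is equivalent to (\ref{S2}), relying on the established Corollary 3 (whose statement already links $\mathcal{T}_V\phi W=0$ to $(\ker F_{\ast})$ being totally geodesic) to handle the vertical-vertical direction efficiently, and then assembling the two equivalences to conclude that $\nabla F_{\ast}=0$ holds precisely when both (\ref{S1}) and (\ref{S2}) hold.
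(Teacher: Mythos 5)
Your plan reproduces the paper's own strategy: split $TM$ into vertical and horizontal parts, dispose of the horizontal--horizontal case by (\ref{5a}), and convert the vertical--vertical and mixed cases into conditions on $\mathcal{T}$ and $\mathcal{A}$ via the Sasakian identity (\ref{Nambla fi}). The vertical--vertical half does work out (with one correction: the relevant identity here is (\ref{S5}), $\mathcal{T}_{V}\phi W=\phi \mathcal{T}_{V}W$, valid because $\eta$ annihilates vertical vectors and $\mathcal{T}_{V}\xi =0$ by (\ref{IKE2}); the identity (\ref{S15}) you cite belongs to the subsection where $\xi $ is vertical), and one arrives at the paper's (\ref{S3}), whose vanishing is equivalent to (\ref{S1}). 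The genuine gap is precisely the step you postpone as ``bookkeeping'': in the mixed case the $\xi $-term does \emph{not} cancel and is \emph{not} subsumed by (\ref{S1})--(\ref{S2}). Carrying your computation through with (\ref{4}) and (\ref{nablaXxi}) gives the paper's (\ref{S4}),
\begin{equation*}
(\nabla F_{\ast })(X,W)=F_{\ast }\left( \phi \mathcal{A}_{X}\phi
W-g_{M}(W,\phi X)\xi \right) ,
\end{equation*}
where $\phi \mathcal{A}_{X}\phi W\in \Gamma (\phi \ker F_{\ast })$ is orthogonal to $\xi $ and $F_{\ast }$ is injective on horizontal vectors. Hence $(\nabla F_{\ast })(X,W)=0$ for all $X,W$ forces \emph{both} $\mathcal{A}_{X}\phi W=0$ \emph{and} $g_{M}(W,\phi X)=0$, and the latter is unachievable: taking $X=\phi W$ gives $g_{M}(W,\phi ^{2}W)=-g_{M}(W,W)\neq 0$ for $W\neq 0$.

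Consequently (\ref{S1}) and (\ref{S2}) are necessary but not sufficient, and the ``if'' direction cannot be rescued by more careful bookkeeping: even when (\ref{S1}) and (\ref{S2}) hold, one has $(\nabla F_{\ast })(\phi W,W)=F_{\ast }(\phi \mathcal{A}_{\phi W}\phi W+g_{M}(W,W)\xi )=g_{M}(W,W)F_{\ast }\xi \neq 0$, since $\mathcal{A}_{\phi W}\phi W=0$ by (\ref{TUW2}) and $F_{\ast }\xi \neq 0$ because $\xi $ is horizontal. An honest completion of your argument therefore proves that $F$ is \emph{never} a totally geodesic map when $\xi $ is horizontal --- the analogue of Theorem 5 --- rather than the stated equivalence. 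You should know that the paper's own proof contains the same defect: it correctly records the term $-g_{M}(W,\phi X)\xi $ in (\ref{S4}) and then silently discards it in the concluding sentence (``Since $\phi $ is non-singular, proof comes from (\ref{S3}), (\ref{S4}) and (\ref{5a})''). So your instinct that the leftover $\xi $-terms are ``the main obstacle'' was exactly right; the flaw in your proposal is treating that obstacle as routine verification to be done later, when in fact it is fatal to the equivalence being proved.
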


\begin{proof}
First of all, we recall that the second fundamental form of a Riemannian
submersion satisfies (\ref{5a}). For $W,$ $V\in \Gamma (\ker F_{\ast }),$ by
using (\ref{2}), (\ref{5}) and (\ref{Nambla fi}) we get%
\begin{equation}
(\nabla F_{\ast })(W,V)=F_{\ast }(\phi \mathcal{T}_{W}\phi V).  \label{S3}
\end{equation}%
On the other hand by using (\ref{5}) and (\ref{Nambla fi}) we have%
\begin{equation*}
(\nabla F_{\ast })(X,W)=F_{\ast }(\phi \nabla _{X}\phi W)
\end{equation*}%
for $X\in \Gamma ((\ker F_{\ast })^{\bot }).$ Then from (\ref{4}), we obtain 
\begin{equation}
(\nabla F_{\ast })(X,W)=F_{\ast }(\phi \mathcal{A}_{X}\phi W-g(W,\phi X)\xi
).  \label{S4}
\end{equation}%
Since $\phi $ is non-singular, proof comes from (\ref{S3}), (\ref{S4}) and (%
\ref{5a}).
\end{proof}

Finally, we give a necessary and sufficient condition for an anti-invariant
Riemannian submersion such that $(\ker F_{\ast })^{\bot }=\phi \ker F_{\ast
}\oplus \{\xi \}$ to be harmonic.

\begin{theorem}
Let $F:M(\phi ,\xi ,\eta ,g_{M})\rightarrow $ $(N,g_{N})$ be an
anti-invariant Riemannian submersion such that $(\ker F_{\ast })^{\bot
}=\phi \ker F_{\ast }\oplus \{\xi \},$ where $M(\phi ,\xi ,\eta ,g_{M})$ is
a Sasakian manifold and $(N,g_{N})$ is a Riemannian manifold. Then $F$ is
harmonic if and only if Trace$\phi \mathcal{T}_{V}=0$ for $V\in \Gamma (\ker
F_{\ast }).$
\end{theorem}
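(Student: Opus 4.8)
The plan is to mirror the structure of the harmonicity theorem already proved in the vertical-$\xi$ subsection, adapting each step to the fact that $\xi$ is now horizontal. First I would invoke the result of \cite{EJ} that a Riemannian submersion is harmonic if and only if it has minimal fibres; equivalently, fixing a local orthonormal frame $\{e_{1},\dots ,e_{k}\}$ of $\ker F_{\ast }$ (so $k=2m+1-n=m$ by Theorem 7), $F$ is harmonic if and only if the horizontal vector field $N:=\sum_{i=1}^{k}\mathcal{T}_{e_{i}}e_{i}$ vanishes. Since $N$ is horizontal and $(\ker F_{\ast })^{\bot }=\phi \ker F_{\ast }\oplus \{\xi \}$ by hypothesis, it suffices to test $N$ against the spanning family $\{\phi e_{1},\dots ,\phi e_{k},\xi \}$ of $(\ker F_{\ast })^{\bot }$. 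Thus $N=0$ if and only if $g_{M}(N,\phi V)=0$ for all $V\in \Gamma (\ker F_{\ast })$ together with $g_{M}(N,\xi )=0$.

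The $\xi $-component is disposed of first and for free: by the skew-symmetry (\ref{4b}) together with (\ref{IKE2}), $g_{M}(\mathcal{T}_{e_{i}}e_{i},\xi )=-g_{M}(\mathcal{T}_{e_{i}}\xi ,e_{i})=0$, so $g_{M}(N,\xi )=0$ identically. This test has no analogue in the vertical case, where $\xi $ lived inside the fibre, and is the one genuinely new ingredient here; it is nonetheless immediate once (\ref{IKE2}) is in hand.

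For the $\phi V$-component I would first establish the horizontal-$\xi $ analogue of the identity (\ref{S15}). Starting from $\nabla _{V}\phi W=\phi \nabla _{V}W+(\nabla _{V}\phi )W$, inserting the Sasakian relation (\ref{Nambla fi}) and the splitting (\ref{1}), and using $\eta (W)=0$ (because $W$ is vertical while $\xi $ is horizontal), the vertical part of the resulting equation yields $\mathcal{T}_{V}\phi W=\mathcal{V}(\phi \mathcal{T}_{V}W)$ for $V,W\in \Gamma (\ker F_{\ast })$; here (\ref{2}) identifies the vertical part of $\nabla _{V}\phi W$ with $\mathcal{T}_{V}\phi W$, while $\phi \hat{\nabla}_{V}W$ and $g_{M}(V,W)\xi $ contribute nothing vertical (the former being horizontal by anti-invariance, the latter because $\xi $ is horizontal). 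Applying (\ref{4b}), then this identity, then the symmetry (\ref{TUW}) of $\mathcal{T}$ on vertical fields, one rewrites $g_{M}(N,\phi V)=\sum_{i}g_{M}(\mathcal{T}_{e_{i}}e_{i},\phi V)$ as $-\sum_{i}g_{M}(\phi \mathcal{T}_{V}e_{i},e_{i})$, and (\ref{metric}) identifies this last sum with $-\,\mathrm{Trace}\,\phi \mathcal{T}_{V}$, exactly as in the vertical theorem.

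Combining the two components closes the argument: $N=0$ if and only if $g_{M}(N,\phi V)=0$ for every $V$ (the $\xi $-direction being automatic), and by the computation above this holds if and only if $\mathrm{Trace}\,\phi \mathcal{T}_{V}=0$ for all $V\in \Gamma (\ker F_{\ast })$. I expect the only delicate point to be the bookkeeping in deriving $\mathcal{T}_{V}\phi W=\mathcal{V}(\phi \mathcal{T}_{V}W)$ — in particular, tracking which terms are horizontal versus vertical and confirming that the $\eta (W)$ and $g_{M}(V,W)\xi $ contributions drop precisely because $\xi $ is horizontal — since this is exactly where the computation diverges from the vertical case.
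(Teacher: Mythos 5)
Your proposal is correct and follows essentially the same route as the paper's proof: harmonicity is reduced via \cite{EJ} to minimality of the fibres, the key identity $\mathcal{T}_{V}\phi W=\phi \mathcal{T}_{V}W$ (the paper's (\ref{S5})) is derived from (\ref{1}), (\ref{2}) and (\ref{Nambla fi}) using that $\eta$ vanishes on vertical vectors, and the trace identification then comes from (\ref{4b}), (\ref{TUW}) and (\ref{metric}). Your explicit check that the $\xi$-component $g_{M}(\sum_{i}\mathcal{T}_{e_{i}}e_{i},\xi )$ vanishes, via (\ref{4b}) and (\ref{IKE2}), is a detail the paper leaves implicit, and it is a worthwhile addition since the mean curvature vector must indeed be tested against all of $(\ker F_{\ast })^{\bot }=\phi \ker F_{\ast }\oplus \{\xi \}$.
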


\begin{proof}
From \cite{EJ} we know that $F$ is harmonic if and only if $F$ has minimal
fibres. Thus $F$ is harmonic if and only if $\dsum\limits_{i=1}^{k}\mathcal{T%
}_{e_{i}}e_{i}=0,$ where $k$ is dimension of $\ker F_{\ast }$ . On the other
hand, from (\ref{1}), (\ref{2}) and (\ref{Nambla fi}) we get%
\begin{equation}
\mathcal{T}_{V}\phi W=\phi \mathcal{T}_{V}W  \label{S5}
\end{equation}%
for any $W,$ $V\in \Gamma (\ker F_{\ast }).$ Using (\ref{S5}), we get 
\begin{equation*}
\dsum\limits_{i=1}^{k}g_{M}(\mathcal{T}_{e_{i}}\phi
e_{i},V)=-\dsum\limits_{i=1}^{k}g_{M}(\mathcal{T}_{e_{i}}e_{i},\phi V)
\end{equation*}%
for any $V\in \Gamma (\ker F_{\ast }).$ (\ref{4b}) implies that%
\begin{equation*}
\dsum\limits_{i=1}^{k}g_{M}(\phi e_{i},\mathcal{T}_{e_{i}}V)=\dsum%
\limits_{i=1}^{k}g_{M}(\mathcal{T}_{e_{i}}e_{i},\phi V).
\end{equation*}%
Then, using (\ref{TUW}) we have%
\begin{equation*}
\dsum\limits_{i=1}^{k}g_{M}(\phi e_{i},\mathcal{T}_{V\text{ }%
}e_{i})=\dsum\limits_{i=1}^{k}g_{M}(\mathcal{T}_{e_{i}}e_{i},\phi V).
\end{equation*}%
Hence, proof comes from (\ref{metric}).
\end{proof}

\end{document}